 \DeclareMathOperator{\lk}{lk}
  \DeclareMathOperator{\ord}{Ord}
\DeclareMathOperator{\colim}{colim} \DeclareMathOperator{\hocolim}{hocolim}
\DeclareMathOperator{\rest}{rest}
\DeclareMathOperator{\birth}{birth}
\newcommand{\Zo}{\mathbb{Z}}
\newcommand{\Ro}{\mathbb{R}}
\newcommand{\Hr}{\widetilde{H}}
\newcommand{\ma}{\mu}
\newcommand{\na}{\nu}
\newcommand{\tG}{\tilde{G}}
\newcommand{\tD}{\tilde{D}}
\newcommand{\tb}{T_{\birth}}
\newcommand{\ST}[1]{\mbox{\upshape\small #1}}
\newcommand{\cat}{\ST{CAT}}
\newcommand{\Top}{\ST{TOP}}
\newcommand{\num}{w}
\newcounter{stmcounter}[section]
\newcounter{thcounter}
\numberwithin{equation}{section}
\theoremstyle{plain}
\newtheorem{cor}[stmcounter]{Corollary}
\newtheorem{thm}[thcounter]{Theorem}
\newtheorem{prop}[stmcounter]{Proposition}
\newtheorem{lem}[stmcounter]{Lemma}
\theoremstyle{definition}
\newtheorem{defin}[stmcounter]{Definition}
\theoremstyle{remark}
\newtheorem{ex}[stmcounter]{Example}
\newtheorem{rem}[stmcounter]{Remark}
\newtheorem{con}[stmcounter]{Construction}
\begin{document}

\title{Clique complexes of multigraphs, edge inflations, and tournaplexes}

\author{Anton Ayzenberg}
\address{Laboratory of algebraic topology and its applications, Faculty of computer science, Higher School of Economics}
\email{ayzenberga@gmail.com}

\author{Alexey Rukhovich}
\address{Skolkovo Institute of Science and Technology and Laboratory of algebraic topology and its applications, Faculty of computer science, Higher School of Economics}
\email{alex-ruhovich@mail.ru }

\date{\today}
\thanks{The article was prepared within the framework of the HSE University Basic Research Program}

\subjclass[2020]{Primary 55P10, 55U10, 55-08 ; Secondary 55N31, 55U05, 06A07, 55P40}

\keywords{Poset fiber theorems, multigraphs, digraphs, inflated complexes, Cohen--Macaulay complexes, persistent homology}

\begin{abstract}
In this paper we introduce and study the topology of clique complexes of multigraphs without loops. These clique complexes generalize tournaplexes, which were recently introduced by Govc, Levi, and Smith for the topological study of brain functional networks. We study a general construction of edge-inflated simplicial posets, which generalize clique complexes of multigraphs. The poset fiber theorem of Bjorner, Wachs, and Welker is applied to obtain the homotopy wedge decomposition of an edge-inflated simplicial poset. The homological corollary of this result allows to parallelize the homology computations for edge inflated complexes, in particular, clique complexes of multigraphs and tournaplexes. We provide functorial versions of some results to be used in computations of persistent homology. Finally, we introduce a general notion of simplex inflations and prove homotopy wedge decompositions for this class of spaces.
\end{abstract}

\maketitle

\section{Introduction}\label{secIntro}

The notion of a clique complex of a simple graph is well known in topological data analysis. The data arising from different fields of study often comes in the form of simple graphs, probably with additional structures, such as weights or distances. Clique complexes is a natural mean to transform graphs into higher dimensional objects which can be further studied with tools of algebraic topology: mainly simplicial and persistent homology.

However, sometimes the data comes in the form of more general discrete structures. Recently, there has been an arising interest in the analysis of these structures by topological methods. As a basic example, we mention the topological research at Blue Brain Project~\cite{BlBr1} which is concentrated around directed graphs $\Gamma$. There are two important topological constructions associated to $\Gamma$: the \emph{directed flag complex} $dF.\Gamma$ (the cell complex whose cells are the cliques of $\Gamma$ without directed cycles), and the \emph{tournaplex} $tF.\Gamma$ (the cell complex whose cells are all possible tournaments in $\Gamma$). Homology of the spaces $dF.\Gamma$ and $tF.\Gamma$ provide numerical features which can be used, for example, in the classification of directed networks' types.

In this paper we concentrate on studying the homotopy types of tournaplexes $tF.\Gamma$. One can notice that directions of arrows are actually irrelevant in the construction of a tournaplex: forgetting arrow directions in $\Gamma$ one gets a multigraph $\tilde{\Gamma}$ with multiplicity of each edge at most $2$ (multiple edges arise if we have both arrows $(i,j)$ and $(j,i)$ in $\Gamma$). We consider general multigraphs with multiple edges, but without loops, and introduce the notion of a clique complex of a multigraph. Even more generally, one can start with any simplicial complex or simplicial poset, and consider the operation which multiplies its edges in a natural way. We call this operation edge inflation in analogy with vertex inflations introduced in~\cite{BjWW}. The clique complex of a multigraph coincides with the edge-inflated clique complex of the underlying simple graph.

We show that edge-inflated simplicial posets admit homotopy wedge decompositions, whose wedge summands are the original simplicial complex and suspensions over links of its simplices, see Theorem~\ref{thmWedgeDecomposition}. This observation allows to parallelize computations of simplicial homology of edge-inflated posets: homology of each link can be computed separately. The general result applies to clique complexes of multigraphs and to tournaplexes.

A multigraph is called complete if each two vertices are connected by at least one edge. The homotopy wedge decomposition above relies on two main ingredients: the observation that the clique complex of a complete multigraph is a wedge of spheres, see Theorem~\ref{thmWedgeOfSpheres}, and the poset fiber theorem proved by Bj\"{o}rner, Wachs, and Welker~\cite{BjWW}. The first statement opens an interesting direction of research. A new invariant of a complete multigraph is introduced: the top Betti number of its clique complex. We give combinatorial criteria for this number to be equal to $0$ or $1$.

We prove a collection of related results. In order to simplify computations of persistent homology (as opposed to ordinary homology) one needs to prove the functoriality of the wedge decompositions. We prove the functorial version of the poset fiber theorem and use it to formulate and prove the functorial version of the wedge decomposition for edge-inflated simplicial posets. Next, we introduce the general notion of simplex inflations in a simplicial poset, and prove the analogue of the wedge decomposition. Finally, we show that Cohen--Macaulay property of a simplicial poset is preserved by inflations of simplices.

The paper is organized as follows. In Section~\ref{secSimpPosets} we give all necessary definitions. Two main results of the paper are formulated in Section~\ref{secTwoTheorems}. Theorem~\ref{thmWedgeOfSpheres} is proved in Section~\ref{secProofOfSphereWedge}, there we discuss the combinatorial properties of the new invariant of complete multigraphs. Theorem~\ref{thmWedgeDecomposition} is proved in Section~\ref{secProofOfWedgeDecomposition}, there we recall the poset fiber theorem. In Section~\ref{secFunctorialityAndPersistence}, we prove the functorial versions of the poset fiber theorem and of Theorem~\ref{thmWedgeDecomposition} and discuss how these results can be applied to parallelize persistent homology calculations. In Section~\ref{secGeneralInflations} we introduce the general notion of simplex-inflated simplicial poset and prove the wedge decomposition result for this construction. In the particular case of edge-inflated simplicial poset this proof gives an alternative (and probably simpler) viewpoint on the other results of the paper.

\section{Simplicial posets}\label{secSimpPosets}

\subsection{Simplicial complexes}

Let $K$ be a finite simplicial complex on a vertex set $V$. 
%
In the following sometimes we do not make a distinction between simplicial complexes and their geometrical realizations. In particular, the notation $K\simeq L$ means that geometrical realizations of the complexes $K$ and $L$ are homotopy equivalent.

Recall that for any simple graph $G$ there is a simplicial complex $F.G$ called the clique complex of $G$ whose simplices are the cliques in $G$.

\subsection{Simplicial posets}

Let $S$ be a finite partially ordered set (shortly, poset). 
For $s\in S$ consider the posets $S_{\geqslant s}=\{s'\in S\mid s'\geqslant s\}$, $S_{\leqslant s}=\{s'\in S\mid s'\leqslant s\}$ and similarly for $S_{>s}$ and $S_{<s}$.

\begin{defin}\label{defOrderComplex}
Let $\ord(S)$ denote the simplical complex on the set $S$, whose simplices are the linearly ordered subposets (chains) in $S$. $\ord(S)$ is called \emph{the order complex} of the poset $S$. The geometrical realization $|S|=|\ord S|$ is called the \emph{geometrical realization} of~$S$.
\end{defin}

\begin{defin}\label{definSimpPoset}
A poset $S$ is called \emph{simplicial} if, for any element $I\in S$, the lower order ideal $S_{\leqslant I}$ is isomorphic to the poset of faces of a $k$-dimensional simplex. The number $k$ is called the dimension of $I$ and denoted $\dim I$. Elements of $S$ are called simplices, simplices of dimension $0$ are called vertices, simplices of dimension $1$ are called edges.
\end{defin}

Any simplicial complex is a simplicial poset. If $X$ is either a simplicial poset or a simplex of a simplicial poset, we denote the vertex set of $X$ by $V(X)$ and the edge set by $E(X)$. If $I\in S$ is a simplex, then $|V(I)|=\dim I+1$. The edge set $E(I)$ of a simplex $I$ can be identified with the set of pairs ${V(I)\choose 2}$.

\begin{defin}\label{definLinkInPoset}
Let $I$ be a simplex of a simplicial complex $S$. The poset $\lk_SI=S_{>I}$ is called the \emph{link} of $I$ in $S$. Link is again a simplicial poset.
\end{defin}

This definition is compatible with the standard definition of link in a simplicial complex.

\subsection{Multigraphs}

Sometimes the term ``multigraph'' refers to the analogues of graphs having loops and multiple edges. We want to allow multiple edges, but not the loops, and call such objects multigraphs.

\begin{defin}
A \emph{multigraph} is a 1-dimensional simplicial poset.
\end{defin}

Let $\tilde{G}$ be a multigraph with the vertex set $V$. Let $G$ be the underlying graph of $\tilde{G}$, i.e. the simple graph on the vertex set $V$ obtained by forgetting multiplicities of edges. A multigraph $\tilde{G}$ is encoded by a pair $(G,\ma)$, where $G$ is a simple graph, and the function $\ma\colon E(G)\to \Zo_+$ assigns positive multiplicities to edges of $G$.

The following statement is quite simple and straightforward, see Fig.~\ref{figMultigraph}, but it gives a flavour of our main result.

\begin{prop}
Let $G$ (hence $\tilde{G}$) be connected. Then
\[
|\tilde{G}|\simeq |G|\vee \bigvee_{e\in E(G)}(S^1)^{\vee(\ma(e)-1)}.
\]
Here $X^{\vee k}$ denotes the wedge of $k$ copies of a space $X$.
\end{prop}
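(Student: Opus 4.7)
The plan is to reduce the statement to a counting of first Betti numbers, using the classical fact that any finite connected $1$-dimensional CW complex (i.e.\ a graph) is homotopy equivalent to a wedge of $b_1$ circles. Both $|\tilde{G}|$ and $|G|\vee\bigvee_{e\in E(G)}(S^1)^{\vee(\ma(e)-1)}$ are connected $1$-dimensional CW complexes, so it suffices to verify they have equal first Betti number.

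First I would construct an explicit embedding $|G|\hookrightarrow|\tilde{G}|$ by choosing, for each $e\in E(G)$, one of the $\ma(e)$ copies of $e$ in $\tilde{G}$ to represent it; call the resulting subcomplex $|G|\subseteq|\tilde{G}|$. Next, using the hypothesis that $G$ is connected, pick a spanning tree $T\subseteq G$. Its image in $|G|\subseteq|\tilde{G}|$ is contractible, so collapsing it yields the homotopy equivalence
\[
|\tilde{G}|\simeq |\tilde{G}|/|T|,
\]
and the quotient is a wedge of circles, one for each $1$-cell of $\tilde{G}$ not in $T$. Counting gives
\[
b_1(|\tilde{G}|)=|E(\tilde{G})|-(|V|-1)=\sum_{e\in E(G)}\ma(e)-|V|+1.
\]

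On the other side, the same spanning-tree argument applied to $G$ yields $|G|\simeq\bigvee^{|E(G)|-|V|+1}S^1$, hence
\[
b_1\!\left(|G|\vee\bigvee_{e\in E(G)}(S^1)^{\vee(\ma(e)-1)}\right)=(|E(G)|-|V|+1)+\sum_{e\in E(G)}(\ma(e)-1),
\]
which simplifies to $\sum_{e\in E(G)}\ma(e)-|V|+1$, agreeing with $b_1(|\tilde{G}|)$. Since both sides are connected $1$-dimensional CW complexes with the same first Betti number, they are homotopy equivalent, which proves the proposition.

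There is essentially no hard step: the only thing that has to be checked carefully is that the wedge on the right is well-defined up to homotopy (no choice of basepoint matters because $|G|$ is path-connected), and that the contribution of the $(\ma(e)-1)$ "extra" parallel copies of each edge $e$ matches the bouquet of $\ma(e)-1$ circles obtained by pairing each extra copy with the chosen representative of $e$. This last observation is the geometric content of the statement and explains why the wedge decomposition looks the way it does; it also previews the combinatorial mechanism behind the general edge-inflation theorem (Theorem~\ref{thmWedgeDecomposition}), where links of simplices will play the role that vertex pairs play here.
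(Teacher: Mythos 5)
Your proof is correct. The paper itself offers no proof of this proposition --- it is declared ``quite simple and straightforward'' with a reference to a figure --- so your argument is a legitimate filling-in of the omitted details, and the route you take (both sides are connected $1$-dimensional CW complexes, a connected graph is homotopy equivalent to a wedge of $b_1$ circles by collapsing a spanning tree, and the two first Betti numbers $\sum_{e\in E(G)}\ma(e)-|V|+1$ agree) is the standard elementary one that the authors surely have in mind. It is worth noting that the proposition is also the $1$-dimensional degenerate case of the paper's general machinery: Theorem~\ref{thmWedgeDecomposition} applied to $S=G$ produces exactly the summands $\Sigma^{2}|\lk_G e|=\Sigma^2\varnothing\cong S^1$ with multiplicity $n(\ma_e,e)=\ma(e)-1$ for each edge $e$, since links of edges in a graph are empty. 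Your Euler-characteristic argument is more elementary and self-contained but does not generalize to higher dimensions, which is precisely why the paper needs the poset fiber theorem for the general statement; your closing remark about pairing each extra parallel copy of $e$ with the chosen representative correctly identifies the geometric mechanism that the general theorem formalizes.
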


\begin{figure}[h]
\begin{center}
\includegraphics[scale=0.2]{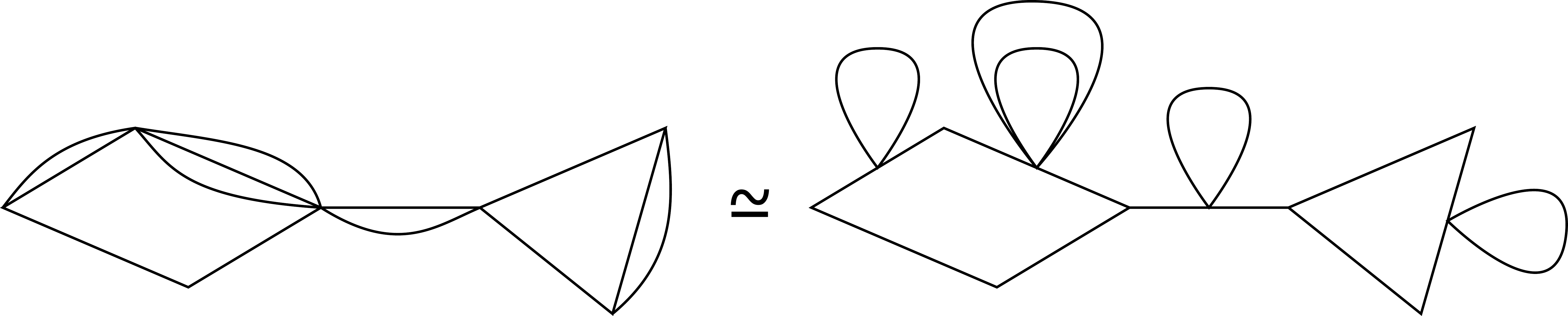}
\end{center}
\caption{Topology of a multigraph}\label{figMultigraph}
\end{figure}

In other words, the homotopy type of a multigraph depends on the homotopy type of the underlying graph and on the combinatorics of multiple edges in a controllable way. Our main result, Theorem~\ref{thmWedgeDecomposition} transfers this idea to higher dimensions.

\subsection{Edge-inflated simplicial posets}

Let us fix an edge multiplicity function \[\ma\colon E(S)\to \Zo_+.\]

\begin{defin}
Let $S$ be a simplicial poset and $\ma\colon E(S)\to \Zo_+$ be an edge multiplicity function. Consider the poset $S_\ma$ defined as follows. The elements of $S_\ma$ are the pairs $(I,c_I)$, where $I\in S$ is a simplex of $S$ and $c_I\colon E(I)\to\Zo_+$ is a ``sheet-counting function'', satisfying the condition: for any edge $e\leqslant I$, $c_I(e)\in\{1,\ldots,\ma(e)\}$. The partial order on $S_\ma$ is defined as follows: $(I,c_I)<(J,c_J)$ if $I<J$ and $c_I$ equals the restriction of $c_J$ to the subset $E(I)\subset E(J)$. We call $S_\ma$ the \emph{edge inflation} of $S$.
\end{defin}

The definition implies that sheet-counting function of an element in $S_\ma$ determines sheet-counting functions of lower elements uniquely. Hence $S_\ma$ is again a simplicial poset.

Stating things in simple words, we make copies of each edge of $S$ according to the multiplicity function, and each collection of copies of edges spans its own copy of each higher-dimensional simplex.

\begin{rem}
If a multigraph $\tilde{G}$ is encoded by a pair $(G,\ma)$, then $\tilde{G}=G_\ma$.
\end{rem}

\begin{defin}
Let $\tilde{G}=G_\ma$ be a multigraph. A \emph{clique} of a multigraph $\tilde{G}$ is a sub-multigraph of $\tilde{G}$ isomorphic to a complete simple graph. The \emph{clique complex} $F.\tilde{G}$ of a multigraph $\tilde{G}$ is the poset of all cliques ordered by inclusion. The poset $F.\tilde{G}$ is simplicial.
\end{defin}

The next lemma easily follows from the definitions.

\begin{lem}
Let $K$ be the clique complex of a simple graph $G$, $K=F.G$. Then for any edge multiplicity function $\mu$ the poset $K_\ma$ is isomorphic to the clique complex of the multigraph $G_\ma$.
\end{lem}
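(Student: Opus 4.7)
The plan is to exhibit an explicit order-preserving bijection between $K_\mu$ and $F.(G_\mu)$ by matching up the defining data on each side. First, I would set up the identifications. Since $K=F.G$, the vertices of $K$ are the vertices of $G$ and the $1$-simplices of $K$ are exactly the edges of $G$; hence $\mu\colon E(K)\to\Zo_+$ and $\mu\colon E(G)\to\Zo_+$ are literally the same datum. A simplex $I\in K$ is by definition a clique of $G$, and $E(I)={V(I)\choose 2}$ is a subset of $E(G)$, so it makes sense to restrict $\mu$ to $E(I)$ and to speak about copies of the edges of $I$ inside $G_\mu$.

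Next I would describe the map. An element $(I,c_I)\in K_\mu$ consists of a clique $I$ of $G$ together with, for every edge $e\in E(I)$, a choice $c_I(e)\in\{1,\ldots,\mu(e)\}$ of a parallel copy of $e$ in $G_\mu$. I would send $(I,c_I)$ to the sub-multigraph of $G_\mu$ on the vertex set $V(I)$ whose edges are precisely the chosen copies of the elements of $E(I)$; this sub-multigraph is isomorphic to the complete simple graph on $V(I)$, hence a clique of $G_\mu$, i.e.\ an element of $F.(G_\mu)$. Conversely, any clique $C\in F.(G_\mu)$ determines a subset $V(C)\subseteq V$ which must span a clique $I$ of the underlying graph $G$, together with a choice of one parallel copy for each edge of $I$ — exactly the data of a pair $(I,c_I)\in K_\mu$. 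These two constructions are mutually inverse, so they define a bijection $\Phi\colon K_\mu\to F.(G_\mu)$.

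Finally I would check that $\Phi$ is an isomorphism of posets. In $K_\mu$ the relation $(I,c_I)\leqslant (J,c_J)$ means $I\leqslant J$ in $K$ (i.e.\ $V(I)\subseteq V(J)$, both spanning cliques of $G$) together with $c_I=c_J|_{E(I)}$; under $\Phi$ this translates directly into containment of the corresponding cliques of $G_\mu$, since the vertex sets are nested and the selected edge copies of the smaller clique agree with those of the larger one. There is no genuine obstacle here; the whole proof is an unpacking of definitions, and the only bookkeeping step worth naming is the identification of $E(I)$ (edges of a simplex of $K$) with a subset of $E(G)$, which is immediate from the definition of the clique complex.
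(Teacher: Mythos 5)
Your proof is correct and is exactly the definition-unpacking the paper has in mind: the paper gives no explicit argument, stating only that the lemma ``easily follows from the definitions,'' and your explicit bijection $(I,c_I)\mapsto$ (sub-multigraph of $G_\mu$ on $V(I)$ with the chosen edge copies) together with the order-preservation check is the intended content. Nothing is missing.
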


Recall that a \emph{digraph} is a collection of vertices together with a set of ordered pairs of vertices $(v_1,v_2)$ such that $v_1\neq v_2$.

\begin{defin}
A \emph{tournament} is a digraph on a vertex set $V$, in which, for any two vertices $v\neq w$, there exists either an arrow $(v,w)$ or an arrow $(w,v)$ but not both. If $\Gamma$ is a digraph, then a directed subgraph of $\Gamma$ isomorphic to a tournament is a called a \emph{tournament in $\Gamma$}.
\end{defin}

\begin{defin}
Let $\Gamma$ be a digraph. The \emph{flag tournaplex} $tF.\Gamma$ is a partially ordered set of tournaments of $\Gamma$. This poset is simplicial.
\end{defin}

\begin{rem}
It can be seen that directions of arrows are irrelevant in the definition of the tournaplex. Forgetting the directions, we can turn any digraph $\Gamma$ into a multigraph $\tilde{\Gamma}$, where the multiplicity $\ma(e)$ of each edge is at most $2$. Then the tournaplex $tF.\Gamma$ coincides with the clique complex $F.\tilde{\Gamma}$ simply by definition.
\end{rem}

\section{Wedge decompositions}\label{secTwoTheorems}

Let us state the main results of this paper related to edge inflated simplicial posets.

\begin{thm}\label{thmWedgeOfSpheres}
Let $\Delta_V$ be the simplex on a vertex set $V$, $|V|=n$, considered as a simplicial complex. Then, for any multiplicity function $\ma\colon E(\Delta_V)={V\choose 2}\to \Zo_+$, the edge inflated simplicial poset $(\Delta_V)_\ma$ is homotopy equivalent to a wedge of spheres of dimension $n-1$.
\end{thm}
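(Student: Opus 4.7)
Plan: I will prove the statement by induction on $n$, with a nested induction on the total multiplicity excess $E = \sum_{e \in \binom{V}{2}}(\ma(e) - 1)$ for fixed $n$. The cases $n = 1$ (a point) and $n = 2$ (a multigraph on two vertices, homotopy equivalent to a wedge of $\ma(e) - 1$ circles) are immediate. Fix $n \geq 3$ and assume the theorem for all smaller vertex sets. The base $E = 0$ gives $\ma \equiv 1$ and $(\Delta_V)_\ma = \Delta_V$, which is contractible. For the inductive step, pick an edge $e_0 = \{v_1, v_2\}$ with $\ma(e_0) \geq 2$ and let $\ma'$ agree with $\ma$ except $\ma'(e_0) = \ma(e_0) - 1$. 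By the inner induction, $X \eqd (\Delta_V)_{\ma'}$ is a wedge of $(n-1)$-spheres and sits naturally inside $Y \eqd (\Delta_V)_\ma$, with $Y$ containing one additional copy of $e_0$. Let $B$ denote the closed star of this new copy of $e_0$ in $Y$ and set $A \eqd B \cap X$; then $Y = X \cup B$, and $B$ is a cone (hence contractible), so $Y$ is homotopy equivalent to the mapping cone of the inclusion $A \hookrightarrow X$.

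The key step is to identify the homotopy type of $A$. Unpacking the definitions, $A$ is precisely the subcomplex of simplices $(W, c)$ of $Y$ with $\{v_1, v_2\} \not\subset W$. Writing $U = V \setminus \{v_1, v_2\}$ and $Z_i \eqd (\Delta_{V \setminus v_i})_\ma$, this decomposes as the pushout
\[
A = Z_1 \cup_{(\Delta_U)_\ma} Z_2.
\]
By the outer induction, $Z_1$ and $Z_2$ are wedges of $(n-2)$-spheres and $(\Delta_U)_\ma$ is a wedge of $(n-3)$-spheres. A Mayer--Vietoris computation gives $\widetilde H_k(A) = 0$ for $k \neq n-2$ with $\widetilde H_{n-2}(A)$ free. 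For $n \geq 4$, Seifert--van Kampen combined with the simple connectivity of $Z_1, Z_2$ (provided by the outer induction) yields $\pi_1(A) = 0$; the case $n = 3$ is direct because $A$ is then a connected one-dimensional complex with free first homology. Hurewicz and Whitehead then give $A \simeq \bigvee S^{n-2}$.

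Finally, $X$ is $(n-2)$-connected and $\dim A = n - 2$, so by cellular approximation the inclusion $A \hookrightarrow X$ is null-homotopic. The mapping cone of a null-homotopic map splits as a wedge with the suspension, so $Y \simeq X \vee \Sigma A \simeq \bigvee S^{n-1}$, closing the induction. The main technical obstacle is the identification of $A$ and the verification that it is itself a wedge of $(n-2)$-spheres; once this is in hand, the remaining arguments are routine applications of standard tools in homotopy theory.
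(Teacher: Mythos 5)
Your overall strategy --- peeling off the last copy of an overloaded edge $e_0$, identifying the intersection with the subcomplex of simplices not containing $e_0$, and using high connectivity to split everything into a wedge --- is essentially the paper's, and your identification of $A$ as $(\Delta_{V\setminus v_1})_\ma\cup_{(\Delta_{V\setminus\{v_1,v_2\}})_\ma}(\Delta_{V\setminus v_2})_\ma$, together with the Mayer--Vietoris/van Kampen/Hurewicz treatment of it, is correct (the paper packages this as Lemmas~\ref{lemStackSpheres} and~\ref{lemOverDiamond}). However, there is one genuine error: the claim that $B$, the closed star of the new copy of $e_0$, is a cone and hence contractible. That is a fact about simplicial \emph{complexes}, where $\overline{\st}(\sigma)=\bar\sigma\ast\lk\sigma$; it fails for simplicial posets, because the maximal simplices of $Y$ containing the new copy of $e_0$ also share the various copies of the \emph{other} edges, so their union is not a join over $\bar\sigma$. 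Concretely, your $B$ is isomorphic to $(\Delta_V)_{\ma''}$, where $\ma''(e_0)=1$ and $\ma''=\ma$ elsewhere, and by Proposition~\ref{propContractibleCriterion} this is contractible only when some vertex of $G(\ma'')$ is isolated. For $n=3$ with all three multiplicities equal to $2$ one gets $B\simeq S^2$, and your argument would give $Y\simeq X\vee\Sigma A\simeq S^2\vee S^2\vee S^2$, whereas the correct answer (check the Euler characteristic: $\chi=3-6+8=5$) is a wedge of four $2$-spheres.

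The gap is easily repaired, and the repair lands you exactly on the paper's proof: $B$ has strictly smaller total multiplicity than $Y$, so by your inner induction it is itself a wedge of $(n-1)$-spheres, in particular $(n-2)$-connected; since $\dim A=n-2$, the inclusion $A\hookrightarrow B$ is null-homotopic just as $A\hookrightarrow X$ is, and the double mapping cylinder argument gives $Y\simeq X\vee\Sigma A\vee B$. This is precisely Lemma~\ref{lemStackSpheres} of the paper, with the extra wedge summand $B$ contributing the term $\num(B)$ to the sphere count in~\eqref{eqNumberInUnion}. With that one correction your proof is complete.
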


The number $n(\ma,V)$ of spheres in this wedge can be easily computed via Euler characteristic as follows. Each simplex $J\subseteq V$ generates $\prod_{e\in {J\choose 2}} \ma(e)$ copies in $(\Delta_V)_\ma$. Therefore the number $f_k=f_k((\Delta_V)_\ma)$ of $k$-dimensional simplices is given by
\[
f_k=\sum_{J\subseteq V, |J|=k+1} \prod_{e\in {J\choose 2}} \ma(e)\quad \mbox{ for } k>0
\]
and $f_0((\Delta_V)_\ma)=n$. Hence
\begin{multline}\label{eqNumberInWedge}
n(\ma,V)=f_{n-1}-f_{n-2}+\cdots+(-1)^{n-2}f_2+(-1)^{n-1}n+(-1)^n =\\ =
\sum_{J\subseteq V, |J|\geqslant 2}(-1)^{n-|J|}\prod_{e\in{J\choose 2}}\ma(e) +(-1)^{n-1}(n-1).
\end{multline}

\begin{thm}\label{thmWedgeDecomposition}
Let $S$ be a connected simplicial poset. Then, for any edge-multiplicity function $\ma\colon E(S)\to\Zo_+$, there is a homotopy equivalence
\begin{equation}\label{eqWedgeInflated}
|S_\ma|\simeq |S|\vee \bigvee_{I\in S, \dim I\geqslant 1}(\Sigma^{\dim I+1}|\lk_SI|)^{\vee n(\ma_I,I)}.
\end{equation}
Here $n(\ma_I,I)$ denotes the number given by~\eqref{eqNumberInWedge} for the function $\ma$ restricted to the edge set of $I$, and $X^{\vee k}$ denotes the wedge of $k$ copies of $X$.
\end{thm}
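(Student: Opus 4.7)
The plan is to apply the Björner--Wachs--Welker poset fiber theorem~\cite{BjWW} to the order-preserving surjection
\[
\pi\colon S_\ma \to S, \qquad \pi(I, c_I) = I,
\]
which forgets the sheet-counting data, and then simplify the resulting wedge summands by the standard join calculus for wedges of spheres. The connectedness hypothesis on $S$ is used only to guarantee that the wedge on the right has an unambiguous basepoint.

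The first step is to identify the poset fibers. For each $I\in S$, the subposet $S_{\leqslant I}$ is isomorphic to the face poset of a simplex on $V(I)$, hence $\pi^{-1}(S_{\leqslant I})$ is canonically isomorphic to $(\Delta_{V(I)})_{\ma_I}$, the edge inflation of the full simplex on $V(I)$ by the restricted multiplicity function $\ma_I$. Theorem~\ref{thmWedgeOfSpheres} then identifies its realization with $\bigvee_{n(\ma_I,I)} S^{\dim I}$, which degenerates to a single point when $\dim I = 0$. In particular each such fiber is $(\dim I-1)$-connected, i.e.\ $(\dim\ord(S_{<I}))$-connected, which is exactly the connectivity hypothesis of BWW for the map~$\pi$.

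Invoking the BWW fiber theorem then yields a homotopy equivalence
\[
|S_\ma|\simeq |S|\vee \bigvee_{I\in S} |\pi^{-1}(S_{\leqslant I})|\ast |\ord(S_{>I})|.
\]
Since $S_{>I}=\lk_S I$ and hence $|\ord(S_{>I})|=|\lk_S I|$, and since the standard join identity $(\bigvee_k S^m)\ast Y \simeq (\Sigma^{m+1}Y)^{\vee k}$ applies fiber by fiber, each summand rewrites as $(\Sigma^{\dim I+1}|\lk_S I|)^{\vee n(\ma_I,I)}$. Vertices have point fibers, contributing joins $\pt\ast|\lk_S v|$ which are contractible, so they drop out of the wedge; this explains the index range $\dim I\geqslant 1$ in the statement. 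The main obstacle will be verifying the hypotheses of the BWW fiber theorem in precisely the form that yields this clean wedge decomposition --- once Theorem~\ref{thmWedgeOfSpheres} supplies the fiberwise wedges of top-dimensional spheres, everything else reduces to formal join bookkeeping and a careful match of conventions between $\ord(S_{>I})$ and $\lk_S I$.
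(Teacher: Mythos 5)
Your proposal is correct and follows essentially the same route as the paper's own proof: both apply the Bj\"{o}rner--Wachs--Welker poset fiber theorem to the sheet-forgetting map $S_\ma\to S$, identify the fiber over $I$ with $(\Delta_{V(I)})_{\ma_I}$, use Theorem~\ref{thmWedgeOfSpheres} to verify the connectivity hypothesis, and then rewrite the join summands via $S_{>I}=\lk_SI$ and the standard wedge-of-spheres join identity, with vertices dropping out because their fibers are points. No substantive differences.
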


The homological statement easily follows.

\begin{cor}\label{corComputational}
For any edge multiplicity function $\mu$ on $S$ there holds
\[
\Hr_*(S_\ma)\cong \Hr_*(S)\oplus \bigoplus_{I\in S, \dim I\geqslant 1}n(\ma,I)\Hr_{*-\dim I-1}(\lk_SI).
\]
\end{cor}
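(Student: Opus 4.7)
The corollary is a direct application of standard functorial properties of reduced homology to the homotopy equivalence provided by Theorem~\ref{thmWedgeDecomposition}. The plan is to take reduced singular homology (with any coefficients) of both sides of~\eqref{eqWedgeInflated} and simplify using the fact that reduced homology respects wedge sums and shifts by one under suspension.

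Concretely, I would first apply the functor $\Hr_*$ to the homotopy equivalence
\[
|S_\ma|\simeq |S|\vee \bigvee_{I\in S,\, \dim I\geqslant 1}\bigl(\Sigma^{\dim I+1}|\lk_SI|\bigr)^{\vee n(\ma_I,I)}.
\]
Since $\Hr_*$ carries finite wedges of well-pointed spaces to direct sums (and the CW structure guarantees well-pointedness), the right-hand side becomes $\Hr_*(|S|)\oplus\bigoplus_{I,\,\dim I\geqslant 1} n(\ma_I,I)\,\Hr_*\bigl(\Sigma^{\dim I+1}|\lk_S I|\bigr)$, where the integer coefficient $n(\ma_I,I)$ means a direct sum of that many copies.

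Next I would invoke the suspension isomorphism $\Hr_k(\Sigma X)\cong \Hr_{k-1}(X)$ iterated $\dim I+1$ times, which yields $\Hr_*(\Sigma^{\dim I+1}|\lk_SI|)\cong \Hr_{*-\dim I-1}(|\lk_S I|)$. Substituting this into the previous expression gives exactly the formula of the statement, once we identify $\Hr_*(|S|)$ with $\Hr_*(S)$ and $\Hr_*(|\lk_SI|)$ with $\Hr_*(\lk_SI)$ under the convention from Section~\ref{secSimpPosets} that a simplicial poset is identified with its geometric realization.

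There is no substantive obstacle here: the only thing to be mildly careful about is bookkeeping of the dimension shift (the suspension is $\Sigma^{\dim I+1}$, producing a shift by $\dim I+1$, which matches the index $*-\dim I-1$ on the right), and the interpretation of $n(\ma_I,I)\Hr_{*-\dim I-1}(\lk_SI)$ as a direct sum of $n(\ma_I,I)$ copies of the same graded module. Both are routine.
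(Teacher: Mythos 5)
Your proposal is correct and matches the paper's intent exactly: the paper gives no separate proof, simply remarking that the homological statement ``easily follows'' from Theorem~\ref{thmWedgeDecomposition} via the wedge and suspension isomorphisms you describe. The only point worth making explicit (which the paper flags right after the corollary) is that for maximal simplices $I$ one has $\lk_SI=\varnothing$, so the conventions $\Sigma^k\varnothing\cong S^{k-1}$ and $\Hr_{-1}(\varnothing;R)\cong R$ are needed for your iterated suspension isomorphism to remain valid in that case.
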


In the above two statements we use the following conventions which are standard in combinatorial topology. For the empty poset $\varnothing$ there holds $\Sigma\varnothing=S^0$ (and consequently $\Sigma^k\varnothing\cong S^{k-1}$). It is assumed that $\Hr_{-1}(\varnothing;R)\cong R$ while $\Hr_j(\varnothing;R)=0$ for $j\neq -1$. The suspension isomorphism in homology holds for empty spaces with this convention. This observation becomes essential, since for each maximal simplex $I\in S$ we have $\lk_SI=\varnothing$ appearing in the righthand side of the decompositions.

The formula of Corollary~\ref{corComputational} allows to perform parallel computations of simplicial homology of edge-inflated simplicial posets, in particular, the homology of clique complexes of multigraphs and flag tournaplexes of digraphs.
%

\section{Proof of Theorem~\ref{thmWedgeOfSpheres} and related results}\label{secProofOfSphereWedge}

In the following, by a topological space we mean a (finite) CW-complex. All maps are assumed cellular and all pairs --- CW-pairs. Since we are dealing with geometrical realizations of posets, which are naturally CW-complexes, these assumptions do not restrict the generality of arguments in our context. Recall that a space $X$ is called $j$-connected if $\pi_i(X)=1$ for all $i\leqslant j$.

\begin{lem}\label{lemStackSpheres}
Let $X_1,X_2$ be subcomplexes of $Y$ such that $Y=X_1\cup X_2$ and the following conditions hold: (1) $X_1\simeq (S^k)^{\vee r_1}$,  $X_2\simeq(S^k)^{\vee r_2}$; (2) $A=X_1\cap X_2\simeq (S^{k-1})^{\vee r_3}$. Then $Y$ is homotopy equivalent to the wedge of $r_1+r_2+r_3$ many $k$-dimensional spheres.
\end{lem}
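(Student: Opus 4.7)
The plan is to view $Y$ as the homotopy pushout of the span $X_1\hookleftarrow A\hookrightarrow X_2$ and to exploit that both inclusions of $A$ are null-homotopic. The resulting pushout is then homotopy equivalent to $X_1\vee X_2\vee \Sigma A$, which is a wedge of $r_1+r_2+r_3$ copies of $S^k$ because $\Sigma \bigvee^{r_3} S^{k-1}\simeq \bigvee^{r_3} S^k$.

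First I would observe that the CW-pair hypothesis makes each inclusion $A\hookrightarrow X_i$ a cofibration, so the ordinary union $Y=X_1\cup_A X_2$ already computes the homotopy pushout. Next I would verify that each inclusion $A\hookrightarrow X_i$ is null-homotopic. For $k\geqslant 2$ this is immediate from cellular approximation: $X_i\simeq\bigvee^{r_i} S^k$ is $(k-1)$-connected while $A\simeq\bigvee^{r_3} S^{k-1}$ is homotopy equivalent to a $(k-1)$-dimensional CW complex, so any map $A\to X_i$ is null-homotopic. For $k=1$, $A$ is a finite discrete set up to homotopy and $X_i$ is path-connected, so the points of $A$ may be moved to a chosen basepoint along paths, again yielding a null-homotopy. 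Finally I would invoke the standard fact that a homotopy pushout in which both structure maps are null-homotopic is equivalent to the wedge of the two targets with the suspension of the source; this can be seen directly by identifying the double mapping cylinder of the two constant maps with $X_1$ and $X_2$ attached to the two poles of $\Sigma A$, and then contracting a meridian arc of $\Sigma A$ to obtain the wedge.

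I expect the main subtlety to be a careful basepoint bookkeeping in the null-homotopy step, especially for $k=1$. As an independent sanity check one can compute $\Hr_*(Y)$ by Mayer--Vietoris: since $\Hr_{k-1}(X_i)=0$, the long exact sequence reduces to a short exact sequence $0\to \Hr_k(X_1)\oplus \Hr_k(X_2)\to \Hr_k(Y)\to \Hr_{k-1}(A)\to 0$, which splits to give $\Hr_k(Y)\cong \Zo^{r_1+r_2+r_3}$ with vanishing reduced homology in all other degrees; combined with simple-connectedness of $Y$ (immediate from van Kampen for $k\geqslant 2$, and by inspection for $k=1$), Whitehead's theorem applied to a map from $\bigvee^{r_1+r_2+r_3} S^k$ realising a basis of $\Hr_k(Y)$ then yields the claimed homotopy equivalence.
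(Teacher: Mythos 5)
Your proposal is correct and follows essentially the same route as the paper's proof: replace the union $Y=X_1\cup_A X_2$ by the homotopy pushout (using that the inclusions are cofibrations), observe that both inclusions of $A$ are null-homotopic by connectivity, and conclude $Y\simeq X_1\vee\Sigma A\vee X_2$. Your extra care with the $k=1$ case and the Mayer--Vietoris/Whitehead sanity check go slightly beyond what the paper writes down, but the core argument is identical.
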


\begin{proof}
By definition, $Y=X_1\cup X_2$ is a colimit of the diagram $X_1\hookleftarrow A \hookrightarrow X_2$. This diagram is cofibrant, hence
\[
Y=\colim(X_1\hookleftarrow A \hookrightarrow X_2)\simeq \hocolim(X_1\hookleftarrow A \hookrightarrow X_2).
\]
The homotopy colimit is a union of the mapping cylinders of inclusions $\imath_1\colon A\hookrightarrow X_1$ and $\imath_2\colon A\hookrightarrow X_2$ along $A$. Since $X_1$ is $(k-1)$-connected and $A$ is a wedge of $(k-1)$-spheres, the image $\imath_1(A)$ can be contracted to a point inside $X_1$, and similarly for $X_2$. Therefore,
\[
\hocolim(X_1\hookleftarrow A \hookrightarrow X_2)\simeq X_1\vee \Sigma A\vee X_2.
\]
The statement of lemma now easily follows.
\end{proof}

We prove Theorem~\ref{thmWedgeOfSpheres} by induction on $n=|V|$. The cases $n=0$ and $n=1$ are straightforward.

\begin{lem}\label{lemOverDiamond}
Assume that the statement of Theorem~\ref{thmWedgeOfSpheres} holds for all $n\leqslant k$. Let $L=\Delta_{[k-1]}\ast S^0$ be the simplicial complex on $k+1$ vertices obtained by stacking two $(k-1)$-simplices $\Delta_1$ and $\Delta_2$ along one facet $\Delta'=\Delta_{[k-1]}$. Then, for any multiplicity function $\ma\colon E(L)\to\Zo_+$, the edge-inflated complex $L_\ma$ has homotopy type of a wedge of $k$-dimensional spheres.
\end{lem}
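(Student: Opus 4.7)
The plan is to realize $L_\ma$ as the union of the edge-inflations of its two top simplices and to invoke Lemma~\ref{lemStackSpheres}. Write $\ma_i = \ma|_{E(\Delta_i)}$ for $i=1,2$ and $\ma'=\ma|_{E(\Delta')}$, and let $X_i\subseteq L_\ma$ denote the subposet consisting of those pairs $(I,c_I)\in L_\ma$ for which $I\leqslant\Delta_i$. Because a sheet-counting function on a face $I\leqslant\Delta_i$ only refers to edges of $I$, which are edges of $\Delta_i$, the subposet $X_i$ is canonically isomorphic to $(\Delta_i)_{\ma_i}$. The same observation identifies $X_1\cap X_2$ with $(\Delta')_{\ma'}$; and since every simplex of $L$ is a face of $\Delta_1$ or of $\Delta_2$, we also have $L_\ma = X_1\cup X_2$.

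The next step is to feed these pieces into the inductive hypothesis. Each $\Delta_i$ has $k$ vertices, so Theorem~\ref{thmWedgeOfSpheres} applied at $n=k$ gives $|X_i|\simeq (S^{k-1})^{\vee r_i}$ for some $r_i\geqslant 0$. The common facet $\Delta'$ has $k-1$ vertices, so similarly $|X_1\cap X_2|\simeq (S^{k-2})^{\vee r_3}$. These are exactly the hypotheses of Lemma~\ref{lemStackSpheres} (with the role of its parameter $k$ played by $k-1$), yielding
\[
|L_\ma|\ \simeq\ (S^{k-1})^{\vee(r_1+r_2+r_3)},
\]
i.e.\ a wedge of spheres of the dimension claimed.

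The main technical content lies in the identifications of the first paragraph: one must verify that restricting $\ma$ to the edges of a face commutes with inflation and that the intersection of the two resulting subposets is precisely $(\Delta')_{\ma'}$. Both assertions follow from a direct reading of the definition of $S_\ma$, since a sheet-counting function is supported on the edges of its underlying simplex and the partial order is given by restriction. A small auxiliary point arises in the boundary case $k=1$, where $\Delta'$ is empty and so $X_1\cap X_2=\varnothing$; this is handled either via the conventions $\Sigma\varnothing=S^0$ adopted in Section~\ref{secTwoTheorems}, or directly, since in this case $L=L_\ma\simeq S^0$ is exhibited as a one-fold wedge of $0$-dimensional spheres.
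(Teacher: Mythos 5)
Your proof is correct and is precisely the paper's own (one-line) argument with the details written out: the paper likewise decomposes $L_\ma=(\Delta_1)_\ma\cup_{\Delta'_\ma}(\Delta_2)_\ma$ and applies Lemma~\ref{lemStackSpheres} together with the inductive hypothesis of Theorem~\ref{thmWedgeOfSpheres}. One remark: your conclusion, a wedge of $(k-1)$-dimensional spheres, is the correct one --- since $L$ is only $(k-1)$-dimensional, the ``$k$-dimensional'' in the lemma's statement is an off-by-one slip, as confirmed by the way the lemma is invoked in the proof of Theorem~\ref{thmWedgeOfSpheres}.
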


\begin{proof}
Apply Lemma~\ref{lemStackSpheres} to $L_\ma=(\Delta_1)_\ma\cup_{\Delta'_\ma}(\Delta_2)_\ma$.
\end{proof}

\begin{proof}[Proof of Theorem~\ref{thmWedgeOfSpheres}]
Assume that the statement of Theorem~\ref{thmWedgeOfSpheres} holds for all $n\leqslant k$. Let us prove it for $n=k+1$. Consider a multiplicity function $\ma\colon {V\choose 2}=E(\Delta_V)\to \Zo_+$, where $|V|=k+1$. We will use inner induction on the value $s(\ma)=\sum_{e\in E(\Delta_V)}\ma(e)$. There is an alternative:

(1) $\ma(e)=1$ for each $e\in {V\choose 2}$. In this case there is nothing to prove, since $(\Delta_V)_\ma=\Delta_V$ is contractible. This establishes the base of induction, $s(\ma)={k+1\choose 2}$.

(2) There exists an edge $e\in E(\Delta_V)$ such that $\ma(e)>1$. Without loss of generality assume $e=\{1,2\}$. Let $M=\{1,2,\ldots, \ma(e)\}$ denote the set of copies of $e$ in $(\Delta_V)_\ma$, so that $|M|=\ma(e)>1$. Let us represent $M$ as a disjoint union $M_1\sqcup M_2$, such that $0<|M_1|,|M_2|<\ma(e)$. Consider the following subcomplexes in $(\Delta_V)_\ma$:
\[
K_1=\{(I,c_I)\in (\Delta_V)_\ma\mid c_I(e)\in M_1\mbox{ if }e\in I\},\qquad K_2=\{(I,c_I)\in (\Delta_V)_\ma\mid c_I(e)\in M_2\mbox{ if }e\in I\};
\]
We see that $(\Delta_V)_\ma=K_1\cup K_2$. From the construction, it follows that $K_1\cong (\Delta_V)_{\ma_1}$ and $K_2\cong (\Delta_V)_{\ma_2}$, where $\ma_1, \ma_2$ coincide with $\ma$ on all edges except $e$, while $\ma_1(e)=|M_1|$, $\ma_2(e)=|M_2|$. Therefore, by inner induction, both complexes $K_1$ and $K_2$ are homotopy equivalent to wedges of $k$-dimensional spheres.

Since $c_I(e)$ cannot lie simultaneously in $M_1$ and $M_2$, the intersection $K_1\cap K_2$ has the form
\[
K_1\cap K_2=\{(I,c_I)\in (\Delta_V)_\ma\mid e\notin I\}.
\]
This complex coincides with $L_\ma$ where $L = \Delta_1\cup\Delta_2$, $\Delta_1 = \Delta_{V\setminus\{1\}}$, and $\Delta_2 = \Delta_{V\setminus\{2\}}$ (recall that $e=\{1,2\}$). Therefore, Lemma~\ref{lemOverDiamond} applies: the intersection $K_1\cap K_2=L_\ma$ has homotopy type of a wedge of $(k-1)$-dimensional spheres by induction hypothesis.

Applying Lemma~\ref{lemStackSpheres} to the complexes $K_1,K_2$, we prove that $K_1\cup K_2=(\Delta_V)_\ma$ is homotopy equivalent to a wedge of $k$-spheres. This finishes the induction step in the proof of Theorem~\ref{thmWedgeOfSpheres}.
\end{proof}

\begin{rem}
One can extract the exact number of spheres in wedges from the inductive argument used in the proof above. Let $\num(X)$ denote the number of spheres in $X\simeq\bigvee S^k$. From the proof of Lemma~\ref{lemStackSpheres} it follows that
\begin{equation}\label{eqNumberInUnion}
\num(Y) = \num(X_1) + \num(X_2) + \num(A),
\end{equation}
in the notation of the lemma.
\end{rem}

\begin{lem}\label{lemInductiveCalculation}
Let $\ma\colon {V\choose 2}\to\Zo_+$ be a multiplicity function on the edge set of the simplex $\Delta_V$. Let $e=\{v_1,v_2\}$ be an edge, $r=\ma(e)$, and let $\ma_1\colon {V\choose 2}\to\Zo_+$ be the multiplicity function which coincides with $\ma$ on all edges except $e$, while $\ma_1(e)=1$. Then
\[
n(\ma,V)=rn(\ma_1,V)+(r-1)[n(\ma,V\setminus\{v_1\})+n(\ma,V\setminus\{v_2\})+n(\ma,V\setminus\{v_1,v_2\})]
\]
\end{lem}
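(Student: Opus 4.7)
The plan is to mirror the inductive splitting construction used in the proof of Theorem~\ref{thmWedgeOfSpheres}, combined with the additivity formula~\eqref{eqNumberInUnion} from the Remark immediately following it. For each $s \in \{1, \dots, r\}$, let $\mu^{(s)}$ denote the multiplicity function agreeing with $\ma$ on every edge except that $\mu^{(s)}(e) = s$; note that $\mu^{(r)} = \ma$ and $\mu^{(1)} = \ma_1$. The strategy is to obtain a one-step recurrence in $s$ and then telescope from $s = r$ down to $s = 1$.

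First, I would apply the main decomposition from the proof of Theorem~\ref{thmWedgeOfSpheres}: for any $s \geqslant 2$, split the $s$ copies of $e$ in $(\Delta_V)_{\mu^{(s)}}$ as a disjoint union $M_1 \sqcup M_2$ with $|M_1| = s-1$ and $|M_2| = 1$. This realizes $(\Delta_V)_{\mu^{(s)}} = K_1 \cup K_2$ with $K_1 \cong (\Delta_V)_{\mu^{(s-1)}}$, $K_2 \cong (\Delta_V)_{\ma_1}$, and $K_1 \cap K_2 \cong L_{\ma}$, where $L = \Delta_{V\setminus\{v_1\}} \cup \Delta_{V\setminus\{v_2\}}$ glued along $\Delta_{V \setminus \{v_1,v_2\}}$. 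Crucially, $L$ does not contain the edge $e$, so $L_\ma$ is independent of the multiplicity assigned to $e$. Since $K_1, K_2$ are wedges of $(n{-}1)$-spheres (Theorem~\ref{thmWedgeOfSpheres}) and $L_\ma$ is a wedge of $(n{-}2)$-spheres (Lemma~\ref{lemOverDiamond}), formula~\eqref{eqNumberInUnion} applies and yields
\[
n(\mu^{(s)}, V) \;=\; n(\mu^{(s-1)}, V) + n(\ma_1, V) + \num(L_\ma).
\]

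Next, I would iterate this recurrence for $s = r, r{-}1, \dots, 2$. Each step contributes a copy of $n(\ma_1, V) + \num(L_\ma)$, and after $r-1$ iterations the recursion terminates at $\mu^{(1)} = \ma_1$, producing the telescoped identity
\[
n(\ma, V) \;=\; r \cdot n(\ma_1, V) + (r-1)\,\num(L_\ma).
\]

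Finally, I would evaluate $\num(L_\ma)$ by one more application of~\eqref{eqNumberInUnion}, now to the decomposition $L_\ma = (\Delta_{V\setminus\{v_1\}})_\ma \cup (\Delta_{V\setminus\{v_2\}})_\ma$ with intersection $(\Delta_{V\setminus\{v_1,v_2\}})_\ma$. All three of these inflated complexes are wedges of spheres by Theorem~\ref{thmWedgeOfSpheres}, so
\[
\num(L_\ma) \;=\; n(\ma, V\setminus\{v_1\}) + n(\ma, V\setminus\{v_2\}) + n(\ma, V\setminus\{v_1, v_2\}),
\]
and substitution gives the claim. The proof is essentially bookkeeping once the correct decomposition is set up, so there is no deep obstacle; the one delicate point is verifying that the extremes of the recursion behave correctly with the conventions $\Sigma^k \varnothing = S^{k-1}$ and $\widetilde{H}_{-1}(\varnothing) = R$ when $|V|$ is small (so that e.g.\ $n(\ma, V\setminus\{v_1, v_2\})$ is interpreted consistently for $|V| = 2$).
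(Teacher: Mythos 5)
Your proposal is correct and follows essentially the same route as the paper: the same splitting of the copies of $e$ into $M_1\sqcup M_2$ to get the one-step recurrence (the paper's formula~\eqref{eqTechButUseful}), the same telescoping/induction on $r$ to reach $n(\ma,V)=rn(\ma_1,V)+(r-1)\num(L_\ma)$, and the same final application of~\eqref{eqNumberInUnion} to the decomposition of $L_\ma$ along $(\Delta_{V\setminus\{v_1,v_2\}})_\ma$.
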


\begin{proof}
The same argument as in the proof of Theorem~\ref{thmWedgeOfSpheres} shows that
\begin{equation}\label{eqTechButUseful}
n(\ma,V)=\num((\Delta_V)_\ma)=\num((\Delta_V)_{\ma_{r-1}})+\num((\Delta_V)_{\ma_1})+\num(L_\ma)
\end{equation}
where $\ma_{r-1}$ coincides with $\ma$ outside $e$ and $\ma_{r-1}(e)=r-1$; and $L$ is obtained from $\Delta_V$ by deleting the edge $e$. Therefore induction on $r$ implies
\[
n(\ma,V)=rn(\ma_1,V)+(r-1)\num(L_\ma).
\]
Since $L$ is the union of the simplices $\Delta_{V\setminus\{v_1\}}$ and $\Delta_{V\setminus\{v_2\}}$ along their facet $\Delta_{V\setminus\{v_1,v_2\}}$, formula~\eqref{eqNumberInUnion} implies
\[
\num(L_\ma)=n(\ma,V\setminus\{v_1\})+n(\ma,V\setminus\{v_2\})+n(\ma,V\setminus\{v_1,v_2\}),
\]
which completes the proof.
\end{proof}

\begin{ex}
We demonstrate the calculation of the number of spheres in the wedges (as well as the proof of Theorem~\ref{thmWedgeOfSpheres}) on a simple example. Figure~\ref{figArithmetics} shows the number of spheres of the multigraph clique complex: the notation $w(\tilde{G})$ means $\beta_{\dim F.\tilde{G}}(F.\tilde{G})$. It can be seen that the result of this computation coincides with the result of formula~\eqref{eqNumberInWedge}.
\end{ex}

\begin{figure}[h]
\begin{center}
\includegraphics[scale=0.25]{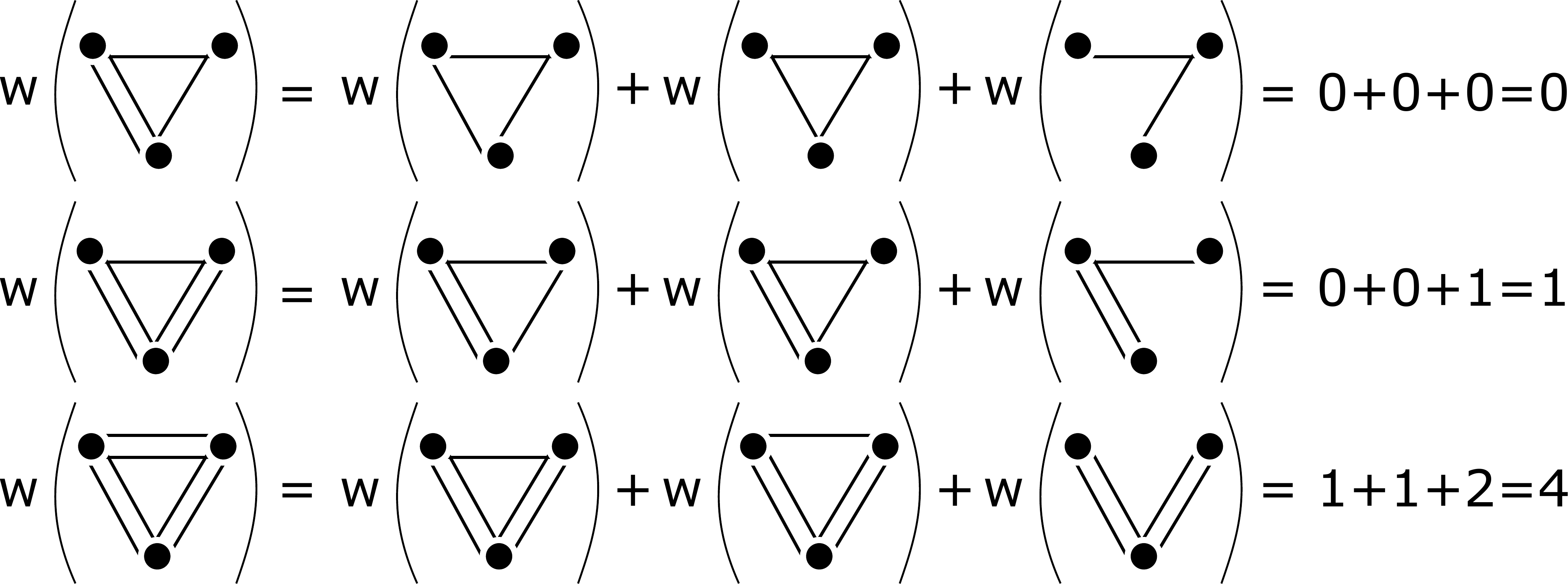}
\end{center}
\caption{Computation of the top Betti number of the multigraph flag complexes}\label{figArithmetics}
\end{figure}

Let us prove several more statements about complexes of the form $(\Delta_V)_\ma$, i.e. clique complexes of complete multigraphs.

\begin{prop}\label{propMonot}
Let $\ma_1,\ma_2\colon{V\choose 2}\to\Zo_+$ be two multiplicity functions such that $\ma_1\preceq\ma_2$ (i.e. $\ma_1(e)\leqslant \ma_2(e)$ for any $e\in{V \choose 2}$). Then the natural inclusion $(\Delta_V)_{\ma_1}\hookrightarrow (\Delta_V)_{\ma_2}$ is homotopy equivalent to an inclusion of subwedge (in other words, the inclusion induces injective map on the top-degree homology modules).
\end{prop}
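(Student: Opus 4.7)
The plan is to reduce to elementary moves and then extract from the proof of Lemma~\ref{lemStackSpheres} that the identification with a wedge of spheres can be chosen to be compatible with the inclusion. First, pick a chain
\[
\ma_1 = \na_0 \preceq \na_1 \preceq \cdots \preceq \na_N = \ma_2
\]
in which $\na_{i+1}$ differs from $\na_i$ only at one edge $e_i$, with $\na_{i+1}(e_i)=\na_i(e_i)+1$. The natural inclusion $(\Delta_V)_{\ma_1}\hookrightarrow (\Delta_V)_{\ma_2}$ factors through the intermediate complexes $(\Delta_V)_{\na_i}$, and a composition of inclusions of wedge summands is again an inclusion of a wedge summand (up to the obvious coherences), so it suffices to handle a single elementary step.

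Fix such a step: $\ma_2(e) = \ma_1(e)+1 =: r$ on a single edge $e=\{v_1,v_2\}$, with $\ma_2=\ma_1$ elsewhere. Mimicking case~(2) in the proof of Theorem~\ref{thmWedgeOfSpheres}, partition the $r$ sheets over $e$ in $(\Delta_V)_{\ma_2}$ as $M_1\sqcup M_2$ with $|M_1|=r-1=\ma_1(e)$ and $|M_2|=1$. This gives $(\Delta_V)_{\ma_2}=K_1\cup K_2$ with $K_1\cong (\Delta_V)_{\ma_1}$, $K_2\cong (\Delta_V)_{\ma_2'}$ (where $\ma_2'(e)=1$), and $A:=K_1\cap K_2 \cong L_{\ma_1|_L}$, $L$ being the subcomplex of $\Delta_V$ obtained by removing $e$. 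By Theorem~\ref{thmWedgeOfSpheres} and Lemma~\ref{lemOverDiamond}, $K_1,K_2$ are wedges of $(n-1)$-spheres and $A$ is a wedge of $(n-2)$-spheres. Under the tautological identification $K_1 \cong (\Delta_V)_{\ma_1}$, the natural inclusion becomes simply $K_1\hookrightarrow K_1\cup K_2$.

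Now I would sharpen Lemma~\ref{lemStackSpheres} by rewriting its proof through the cofiber sequence $A\to K_1\vee K_2 \to K_1\cup_{A} K_2$. Because $K_1$ and $K_2$ are $(n-2)$-connected and $A$ is a wedge of $(n-2)$-spheres, the attaching map $A\to K_1\vee K_2$ is null-homotopic, and therefore the mapping cone is homotopy equivalent to $K_1\vee K_2\vee \Sigma A$ in a way under which the canonical map $K_1\vee K_2\to K_1\cup_A K_2$ becomes the inclusion of the first two wedge summands. In particular the inclusion $K_1\hookrightarrow (\Delta_V)_{\ma_2}$ is homotopy equivalent to the inclusion of the $K_1$-summand in this wedge. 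Iterating along the chain $\na_0\prec\cdots\prec\na_N$ gives the claim for the original inclusion, and hence injectivity on $H_{n-1}$.

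The only subtle point is the strengthening of Lemma~\ref{lemStackSpheres} to track the inclusion of one piece, which I expect to be the main (though mild) obstacle. The ``contract each copy of $A$ separately'' argument used in the original proof does not immediately give functoriality, whereas recasting the homotopy pushout as a mapping cone and invoking null-homotopy of the attaching map does. Once this observation is in place, the rest is a direct consequence of the decomposition from the proof of Theorem~\ref{thmWedgeOfSpheres}.
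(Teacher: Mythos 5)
Your argument is correct, but it is organized differently from the paper's. The paper proves Proposition~\ref{propMonot} by rerunning the entire double induction of Theorem~\ref{thmWedgeOfSpheres} for the \emph{pair} of complexes, replacing Lemma~\ref{lemStackSpheres} by its relative version (Lemma~\ref{lemStackSpheresMorphism}), in which all three maps $A\to\tilde A$, $X_1\to\tilde X_1$, $X_2\to\tilde X_2$ are assumed to be subwedge inclusions. You instead factor $\ma_1\preceq\ma_2$ into single-edge increments and observe that in each increment the source $(\Delta_V)_{\ma_1}$ \emph{is} one of the two pieces $K_1$ of the decomposition of the target, so only an ``absolute'' strengthening of Lemma~\ref{lemStackSpheres} is needed: when both attaching maps $A\to X_1$, $A\to X_2$ of the homotopy pushout are null-homotopic, the splitting $Y\simeq X_1\vee\Sigma A\vee X_2$ can be chosen so that $X_1\hookrightarrow Y$ becomes the wedge-summand inclusion. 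This is in fact already delivered by the paper's own proof of Lemma~\ref{lemStackSpheres} (the contraction of $\imath_1(A)$ takes place inside the mapping cylinder and leaves $X_1$ untouched), so contrary to your worry this step is essentially free; your gain is that no new induction and no three-map relative lemma are required, at the price of having to compose subwedge inclusions along the chain $\na_0\prec\cdots\prec\na_N$. That composition is harmless for the homological form of the statement (split injections with free cokernel compose), and for the space-level form it follows from the Hurewicz-type classification of maps between wedges of $k$-spheres for $k\geqslant 2$, but you should say a word about it rather than dismiss it as ``obvious coherences,'' since the identifications of the intermediate complex with a wedge coming from the two adjacent steps need not agree. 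Two small imprecisions: the double mapping cylinder of $X_1\hookleftarrow A\hookrightarrow X_2$ is not literally the mapping cone of a single map $A\to X_1\vee X_2$ without extra structure (a co-$H$ structure on $A$, which your $A\simeq\bigvee S^{n-2}$ does have for $n\geqslant 3$, or a collapsed basepoint arc), so it is cleaner to argue with the two separate null-homotopies as the paper does; and note that $A=L_{\ma}$ is a wedge of $(n-2)$-spheres by the computation inside the proof of Theorem~\ref{thmWedgeOfSpheres} (the statement of Lemma~\ref{lemOverDiamond} as printed has an index shift). With these points addressed, your proof is complete and arguably more direct than the one in the paper.
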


The proof by induction follows the same lines as the proof of Theorem~\ref{thmWedgeOfSpheres}. There is a natural analogue of Lemma~\ref{lemStackSpheres}, proved the same way.

\begin{lem}\label{lemStackSpheresMorphism}
Let $X_1,X_2$ be subcomplexes of $Y=X_1\cup X_2$, $A=X_1\cap X_2$, $\tilde{X}_1,\tilde{X}_2$ be subcomplexes of $\tilde{Y}=\tilde{X}_1\cup \tilde{X}_2$, $\tilde{A}=\tilde{X}_1\cap \tilde{X}_2$, and there are maps $A\to \tilde{A}$, $X_1\to \tilde{X}_1$, $X_2\to \tilde{X}_2$, $Y\to \tilde{Y}$, commuting with all inclusions. Assume that $X_1\to \tilde{X}_1$ and $X_2\to \tilde{X}_2$ are homotopy equivalent to a inclusions of subwedges of $k$-dimensional spheres into wedges of $k$-dimensional spheres, and $A\to\tilde{A}$ is homotopy equivalent to an inclusion of a subwedge of $(k-1)$-dimensional spheres into a wedge of $(k-1)$-dimensional spheres. Then $Y\to\tilde{Y}$ is also homotopy equivalent to an inclusion of subwedge of $k$-spheres.
\end{lem}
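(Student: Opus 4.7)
The plan is to follow the proof of Lemma~\ref{lemStackSpheres} in a way that is natural with respect to morphisms of pushout diagrams. First I would replace $Y = X_1 \cup_A X_2$ and $\tilde{Y} = \tilde{X}_1 \cup_{\tilde{A}} \tilde{X}_2$ by their double mapping cylinders $M$ and $\tilde{M}$, and the given map $Y \to \tilde{Y}$ by the induced cellular map $M \to \tilde{M}$; this is legitimate since all pairs in sight are CW-pairs, so the strict pushouts compute the homotopy pushouts. As in Lemma~\ref{lemStackSpheres}, the inclusions $A \hookrightarrow X_i$ and $\tilde{A} \hookrightarrow \tilde{X}_i$ are all null-homotopic, since $A$ and $\tilde{A}$ are wedges of $(k-1)$-spheres while $X_i$ and $\tilde{X}_i$ are $(k-1)$-connected. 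Any choice of null-homotopies yields homotopy equivalences $M \simeq X_1 \vee \Sigma A \vee X_2$ and $\tilde{M} \simeq \tilde{X}_1 \vee \Sigma \tilde{A} \vee \tilde{X}_2$.

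The key step is to choose these null-homotopies compatibly with the vertical arrows of the map of diagrams. After replacing the diagrams by homotopy equivalent ones I may assume that $A \hookrightarrow \tilde{A}$ is an honest subwedge inclusion, so $\tilde{A} = A \vee A'$ with $A'$ a wedge of $(k-1)$-spheres. I would pick null-homotopies $H_i \colon A \times I \to X_i$ of $A \hookrightarrow X_i$ arbitrarily. To build $\tilde{H}_i \colon \tilde{A} \times I \to \tilde{X}_i$, I would define $\tilde{H}_i$ on $A \times I$ to be the composition of $H_i$ with $X_i \to \tilde{X}_i$, and extend $\tilde{H}_i$ over $A' \times I$ by choosing any null-homotopy of $A' \hookrightarrow \tilde{X}_i$ (which exists by the same connectivity/dimension argument). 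The squares relating $H_i$ with $\tilde{H}_i$ then commute on the nose.

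These compatible null-homotopies give compatible splittings of $M$ and $\tilde{M}$, so that, up to homotopy, the map $M \to \tilde{M}$ becomes the wedge of $X_1 \to \tilde{X}_1$, $\Sigma A \to \Sigma \tilde{A}$, and $X_2 \to \tilde{X}_2$. The first and third are subwedge inclusions by hypothesis; the middle one is a subwedge inclusion of $k$-spheres because suspension sends a subwedge inclusion of $(k-1)$-spheres to a subwedge inclusion of $k$-spheres. A wedge of subwedge inclusions is itself a subwedge inclusion, which proves the lemma. The main obstacle is making the compatibility of null-homotopies rigorous, in particular ensuring that the normalization of $A \hookrightarrow \tilde{A}$ to a genuine subwedge inclusion is consistent with the chosen presentations of $X_i \hookrightarrow \tilde{X}_i$; the rest is a direct functorial analogue of the argument used in Lemma~\ref{lemStackSpheres}.
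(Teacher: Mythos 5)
Your argument is correct and is essentially the paper's own: the paper proves this lemma simply by asserting it is ``proved the same way'' as Lemma~\ref{lemStackSpheres}, i.e.\ via the homotopy pushout (double mapping cylinder) and null-homotopy of the inclusions of $A$ into the $(k-1)$-connected pieces, and your proposal carries out exactly that argument while making the null-homotopies compatible with the vertical maps. The extra care you take in choosing $\tilde{H}_i$ to extend $H_i$ is precisely the detail the paper leaves implicit, so this matches the intended proof.
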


This allows to perform induction steps as in the proof of Theorem~\ref{thmWedgeOfSpheres}.

\begin{cor}\label{corMonot}
Assume $V$ is fixed. Then the number $n(\ma,V)$ of wedge summands is nonstrictly monotonic with respect to $\ma$.
\end{cor}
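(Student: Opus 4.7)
The plan is to deduce Corollary~\ref{corMonot} as an immediate consequence of Proposition~\ref{propMonot} combined with Theorem~\ref{thmWedgeOfSpheres}. Since the monotonicity claim concerns a single number attached to each multiplicity function, I only need to extract it from the inclusion-of-subwedge statement already established.

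First, I would take any two multiplicity functions $\ma_1 \preceq \ma_2$ on ${V\choose 2}$ and consider the natural inclusion $\iota \colon (\Delta_V)_{\ma_1} \hookrightarrow (\Delta_V)_{\ma_2}$. By Theorem~\ref{thmWedgeOfSpheres}, both spaces are homotopy equivalent to wedges of $(n-1)$-dimensional spheres with $n(\ma_1, V)$ and $n(\ma_2, V)$ summands respectively, where $n = |V|$. In particular, their top reduced homology groups are free of ranks $n(\ma_1, V)$ and $n(\ma_2, V)$.

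Next, Proposition~\ref{propMonot} asserts that $\iota$ is homotopy equivalent to an inclusion of a subwedge, which in particular induces an injection on the top-dimensional homology module. An injective homomorphism from a free abelian group of rank $n(\ma_1, V)$ into a free abelian group of rank $n(\ma_2, V)$ forces $n(\ma_1, V) \leqslant n(\ma_2, V)$, proving the corollary.

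There is no genuine obstacle: all the technical work has been absorbed into Proposition~\ref{propMonot}, whose proof in turn relies on Lemma~\ref{lemStackSpheresMorphism} and an induction paralleling the proof of Theorem~\ref{thmWedgeOfSpheres}. The only care needed is to observe that the injection on top homology is indeed implied by the inclusion-of-subwedges statement, which is immediate since a subwedge of spheres of a given dimension contributes a direct summand to the top-degree homology of the ambient wedge.
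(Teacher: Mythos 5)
Your proof is correct and follows essentially the same route as the paper, which presents this statement as a direct corollary of Proposition~\ref{propMonot}: the inclusion of a subwedge induces an injection on top-degree homology, and comparing ranks of the free top homology groups (which equal $n(\ma_1,V)$ and $n(\ma_2,V)$ by Theorem~\ref{thmWedgeOfSpheres}) gives the inequality. The paper additionally remarks that the monotonicity can also be deduced combinatorially from the recursion~\eqref{eqTechButUseful}, but that is an alternative, not the primary argument.
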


This is difficult to see directly from formula~\eqref{eqNumberInWedge}, however this can be deduced from formula~\eqref{eqTechButUseful}.

Let us classify multiplicity functions which give rise to contractible spaces, and to single spheres. First of all, we have a useful topological observation.

\begin{prop}\label{propComponents}
Assume that the vertex set $V$ is decomposed into two nonempty parts $V=V_1\sqcup V_2$, such that $\ma(e)=1$ for any edge between different parts. Then $(\Delta_V)_\ma\cong (\Delta_{V_1})_{\ma_1}\ast (\Delta_{V_2})_{\ma_2}$, where $\ma_1,\ma_2$ are the restrictions of $\ma$ to ${V_1\choose 2}$ and ${V_2 \choose 2}$ respectively.
\end{prop}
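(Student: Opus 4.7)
The plan is to exhibit an explicit isomorphism of simplicial posets between $(\Delta_V)_\ma$ and $(\Delta_{V_1})_{\ma_1}\ast(\Delta_{V_2})_{\ma_2}$, from which the topological conclusion follows because joins of simplicial posets realize as joins of spaces. Recall that a simplex of the join $P \ast Q$ is a pair $(I_1, I_2)$ where each $I_i$ is either a simplex of the corresponding poset or the empty simplex (not both empty), with the componentwise order.

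First I would set up the bijection on simplices. A simplex of $(\Delta_V)_\ma$ is a pair $(J, c_J)$ with $J\subseteq V$ and $c_J\colon E(J)\to \Zo_+$ satisfying $c_J(e)\in\{1,\dots,\ma(e)\}$. Given such a simplex, decompose $J = J_1 \sqcup J_2$ with $J_i = J\cap V_i$, and split $E(J)$ into $E(J_1)$, $E(J_2)$, and the set of crossing edges between $V_1$ and $V_2$. The key observation is that the hypothesis $\ma(e) = 1$ on crossing edges forces $c_J$ to take the unique value $1$ on every crossing edge, so the data of $c_J$ is equivalent to the pair of restrictions $(c_J|_{E(J_1)}, c_J|_{E(J_2)})$, where the first restriction (together with $J_1$) defines a simplex of $(\Delta_{V_1})_{\ma_1}$ (possibly empty), and similarly for the second. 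The inverse map takes a pair of simplices and glues them along the forced crossing edges; surjectivity and injectivity are transparent.

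Finally I would check that this bijection preserves the partial order in both directions. An inequality $(J, c_J) \leqslant (J', c_{J'})$ in $(\Delta_V)_\ma$ means $J \subseteq J'$ and $c_{J'}$ restricts to $c_J$; intersecting with $V_1$ and $V_2$ immediately yields the componentwise inequalities on the join side, and conversely two compatible inequalities in the factors glue to a single inequality because the crossing-edge values are uniquely determined in both. Thus the map is an isomorphism of simplicial posets, and passing to geometric realizations gives $|(\Delta_V)_\ma|\cong |(\Delta_{V_1})_{\ma_1}|\ast |(\Delta_{V_2})_{\ma_2}|$. There is no serious obstacle here; the only mildly delicate point is keeping track of the empty-simplex convention in the definition of the join of simplicial posets, which is exactly what guarantees that the decomposition $J = J_1 \sqcup J_2$ with one of $J_i$ possibly empty lands in the correct set of simplices of the join.
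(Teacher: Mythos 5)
Your proof is correct and follows essentially the same route as the paper: both identify simplices of $(\Delta_V)_\ma$ with pairs of simplices from the two factors, using the fact that multiplicity $1$ on crossing edges forces the sheet-counting function to be uniquely determined there, so the decomposition is a poset isomorphism onto the join. Your write-up just spells out the order-preservation and empty-simplex bookkeeping that the paper leaves implicit.
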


\begin{proof}
This follows easily from the definition of the join operation: whenever we have two simplicial posets $K_1$ and $K_2$ their join is formed by simplices of the form $J_1\ast J_2$ where $J_1\in K_1$ and $J_2\in K_2$. If $J_1$ and $J_2$ are simplices in $(\Delta_{V_1})_{\ma_1}$ and $(\Delta_{V_2})_{\ma_2}$ their join $J_1\ast J_2$ is by definition a simplex in $(\Delta_V)_\ma$. Conversely, any simplex of $(\Delta_V)_\ma$ decomposes uniquely in the join of simplices in $(\Delta_{V_1})_{\ma_1}$ and $(\Delta_{V_2})_{\ma_2}$ (the uniqueness follows from the fact that all edges between different parts have multiplicity $1$).
\end{proof}

\begin{cor}\label{corIfDisjVertexThenContractible}
If there is a vertex $v\in V$ such that all its adjacent edges have multiplicity 1, then $(\Delta_V)_\ma$ is contractible.
\end{cor}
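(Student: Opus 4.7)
The plan is to apply Proposition~\ref{propComponents} to the partition $V = V_1 \sqcup V_2$ with $V_1 = \{v\}$ and $V_2 = V\setminus\{v\}$. The edges between the two parts are exactly the edges of $\Delta_V$ incident to $v$, and by hypothesis each of these has multiplicity $1$; hence the assumption of Proposition~\ref{propComponents} is satisfied.

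Applying the proposition, I would write
\[
(\Delta_V)_\ma \;\cong\; (\Delta_{\{v\}})_{\ma_1} \ast (\Delta_{V\setminus\{v\}})_{\ma_2},
\]
where $\ma_1,\ma_2$ are the (possibly trivial) restrictions of $\ma$ to the edge sets of the two subsimplices. The first factor $\Delta_{\{v\}}$ has empty edge set, so edge inflation does nothing and $(\Delta_{\{v\}})_{\ma_1}$ is just the single vertex $v$, i.e.\ a point.

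To finish, I would use the standard fact that the join $\pt \ast X$ is the cone $CX$, and in particular is contractible for every $X$. Therefore $(\Delta_V)_\ma$ is contractible, as claimed.

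I expect no real obstacle here: the only small thing to check is that the degenerate case $|V_1|=1$ is admissible in Proposition~\ref{propComponents}. The proposition only requires that $V_1,V_2$ be nonempty, and when $|V_1|=1$ the edge set ${V_1\choose 2}$ is empty so the condition ``$\ma(e)=1$ for every edge between the parts'' reduces precisely to the hypothesis on edges incident to $v$. Hence the argument goes through without modification.
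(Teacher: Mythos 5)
Your proof is correct and is exactly the argument the paper intends: the corollary is stated as an immediate consequence of Proposition~\ref{propComponents}, obtained by taking $V_1=\{v\}$ so that $(\Delta_V)_\ma$ becomes a cone $\pt\ast(\Delta_{V\setminus\{v\}})_{\ma_2}$. Your extra check that the degenerate case $|V_1|=1$ is admissible is a sensible precaution and does not change the argument.
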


\begin{con}
Let $\ma\colon{V\choose 2}\to\Zo_+$ be a multiplicity function. Consider the graph $G(\ma)$ on the vertex set $V$ whose edges are the pairs $e\in {V\choose 2}$ such that $\ma(e)>1$. Proposition~\ref{propComponents} states that $(\Delta_V)_\ma$ is the join of complexes, corresponding to the connected components of $G(\ma)$. If there is a disjoint vertex in $G(\ma)$, then $(\Delta_V)_\ma$ is contractible. In Proposition~\ref{propContractibleCriterion} below we prove that this condition is also necessary for the contractibility of $(\Delta_V)_\ma$.
\end{con}

\begin{defin}
A multiplicity function $\ma$ is called a \emph{starry sky function}, if $\ma(e)\leqslant 2$ for any $e\in{V\choose 2}$ and all connected components of $G(\ma)$ are isomorphic to star graphs with at least 2 vertices.
\end{defin}

\begin{figure}[h]
\begin{center}
\includegraphics[scale=0.35]{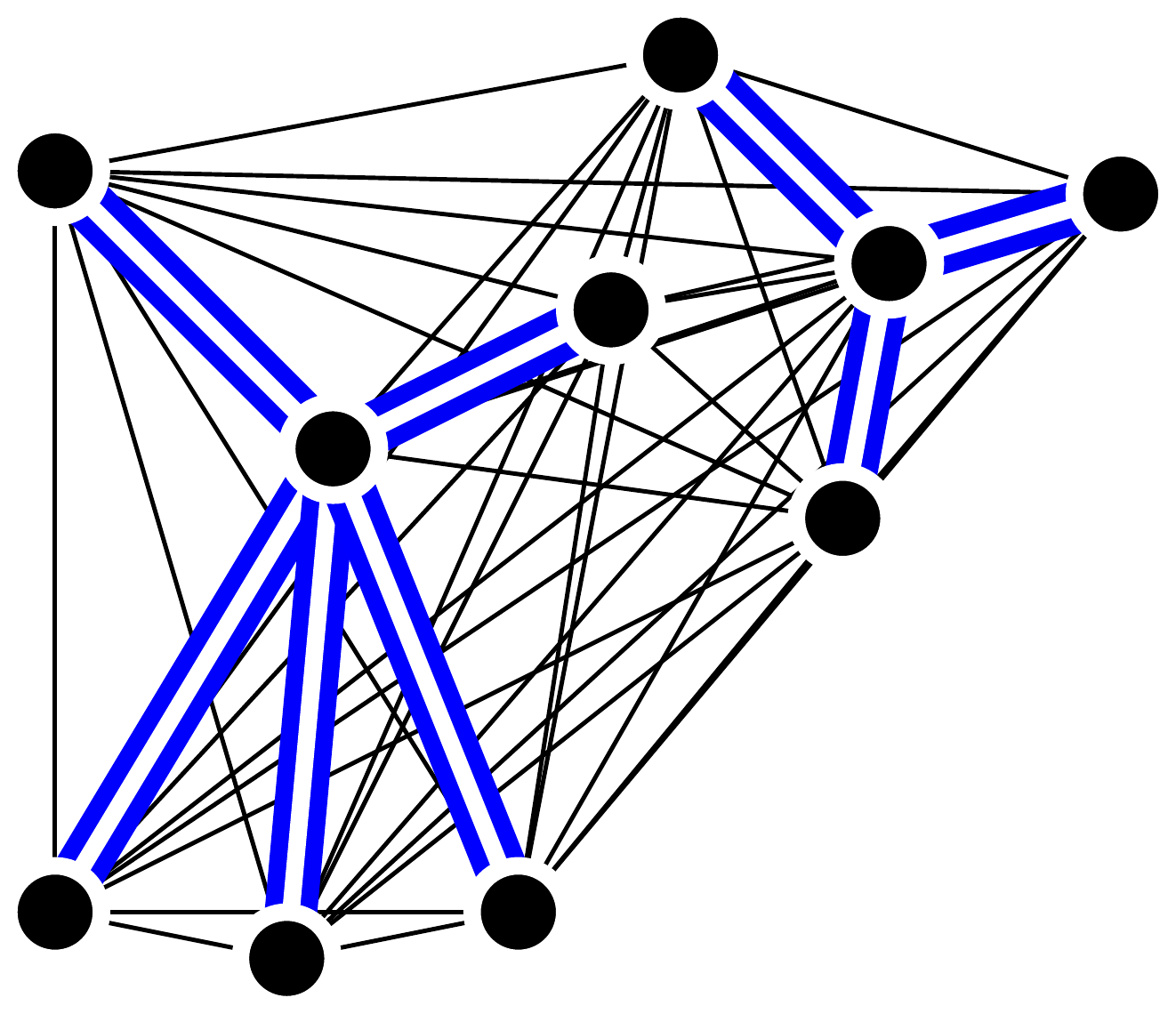}
\end{center}
\caption{Starry sky multiplicity function with two star components}\label{figStarry}
\end{figure}

\begin{lem}\label{lemStarry}
If $\ma$ is a starry sky function, then $(\Delta_V)_\ma$ is homotopy equivalent to a single sphere.
\end{lem}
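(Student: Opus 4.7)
The plan is to reduce Lemma~\ref{lemStarry} to the case of a single star component via the join decomposition of Proposition~\ref{propComponents}, and then to prove the single-star case by induction on the number of leaves, reusing the splitting trick from the proof of Theorem~\ref{thmWedgeOfSpheres}.

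For the reduction, observe that the starry sky hypothesis forbids isolated vertices in $G(\mu)$, so the vertex sets $V_1,\ldots,V_m$ of the star components partition $V$. Since every edge between distinct components has $\mu=1$, Proposition~\ref{propComponents} yields
\[
(\Delta_V)_\mu \;\cong\; (\Delta_{V_1})_{\mu_1}\ast\cdots\ast(\Delta_{V_m})_{\mu_m},
\]
where $\mu_i$ is the restriction of $\mu$ to $\binom{V_i}{2}$. Because the join of spheres $S^{d_1}\ast\cdots\ast S^{d_m}$ is homotopy equivalent to $S^{d_1+\cdots+d_m+m-1}$, it suffices to show that each factor $(\Delta_{V_i})_{\mu_i}$ is homotopy equivalent to a single sphere.

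So fix a single star: let $V=\{c,l_1,\ldots,l_k\}$ with $\mu(\{c,l_j\})=2$ for all $j$ and $\mu(\{l_i,l_j\})=1$ otherwise. I will prove by induction on $k$ that $(\Delta_V)_\mu\simeq S^{k}$. The base case $k=1$ is a digon, i.e.\ $S^1$. For the inductive step pick the edge $e=\{c,l_k\}$ and split $(\Delta_V)_\mu=K_1\cup K_2$ exactly as in the proof of Theorem~\ref{thmWedgeOfSpheres}, so that each $K_i\cong(\Delta_V)_{\mu'}$ where $\mu'$ agrees with $\mu$ outside $e$ and $\mu'(e)=1$. In the graph $G(\mu')$ the vertex $l_k$ is now isolated, so by Corollary~\ref{corIfDisjVertexThenContractible} both $K_1$ and $K_2$ are contractible. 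The intersection $K_1\cap K_2$ is $L_\mu$, where $L=\Delta_{V\setminus\{c\}}\cup_{\Delta_{V\setminus\{c,l_k\}}}\Delta_{V\setminus\{l_k\}}$; inside $L_\mu$ the two pieces $(\Delta_{V\setminus\{c\}})_\mu$ and $(\Delta_{V\setminus\{c,l_k\}})_\mu$ are ordinary simplices (hence contractible), while $(\Delta_{V\setminus\{l_k\}})_\mu$ is the star complex with $k-1$ leaves, homotopy equivalent to $S^{k-1}$ by the induction hypothesis. Applying Lemma~\ref{lemStackSpheres} first to $L_\mu$ gives $L_\mu\simeq S^{k-1}$, and a second application to $(\Delta_V)_\mu=K_1\cup K_2$ gives
\[
(\Delta_V)_\mu \;\simeq\; \Sigma L_\mu \;\simeq\; S^{k},
\]
closing the induction.

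No subtle homotopy-theoretic obstacle arises: the only delicate point is the choice of the edge $e$, which must be incident to a leaf of the star so that removing one of its sheets disconnects $l_k$ in $G(\mu')$, allowing Corollary~\ref{corIfDisjVertexThenContractible} to kill both pieces of the splitting. This is exactly what the ``star with at least $2$ vertices'' condition in the definition of a starry sky function provides. An alternative would be to evaluate formula~\eqref{eqNumberInWedge} directly (the computation collapses via the binomial identity $(2-1)^k=1$), but the geometric induction above has the advantage of exhibiting the sphere rather than merely counting wedge summands.
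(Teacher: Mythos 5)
Your proof is correct and follows essentially the same route as the paper: reduce to a single star component via Proposition~\ref{propComponents}, then induct on the size of the star with the digon as base case. The only cosmetic difference is that you carry out the inductive step geometrically with Lemma~\ref{lemStackSpheres} (checking that both halves of the splitting are contractible and the intersection is $S^{k-1}$), whereas the paper invokes the numerical recursion of Lemma~\ref{lemInductiveCalculation} — which is itself derived from exactly that splitting — so the two arguments coincide in substance.
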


\begin{proof}
According to Proposition~\ref{propComponents}, it is sufficient to prove the statement in the case when $G(\ma)$ has a single connected component, isomorphic to a star graph. Indeed, the join of two spheres is again a sphere. The multigraph corresponding to $\ma$ is as shown on Fig.~\ref{figStarry}. In this case we use induction on the number of vertices of the star. The base, $|V|=2$ corresponds to the multigraph, which has 2 edges between two vertices, its geometrical realization is the circle $S^1$. The induction step follows from Lemma~\ref{lemInductiveCalculation}: the calculation is shown on Fig.~\ref{figStarArithmetics}.
\end{proof}

\begin{figure}[h]
\begin{center}
\includegraphics[scale=0.2]{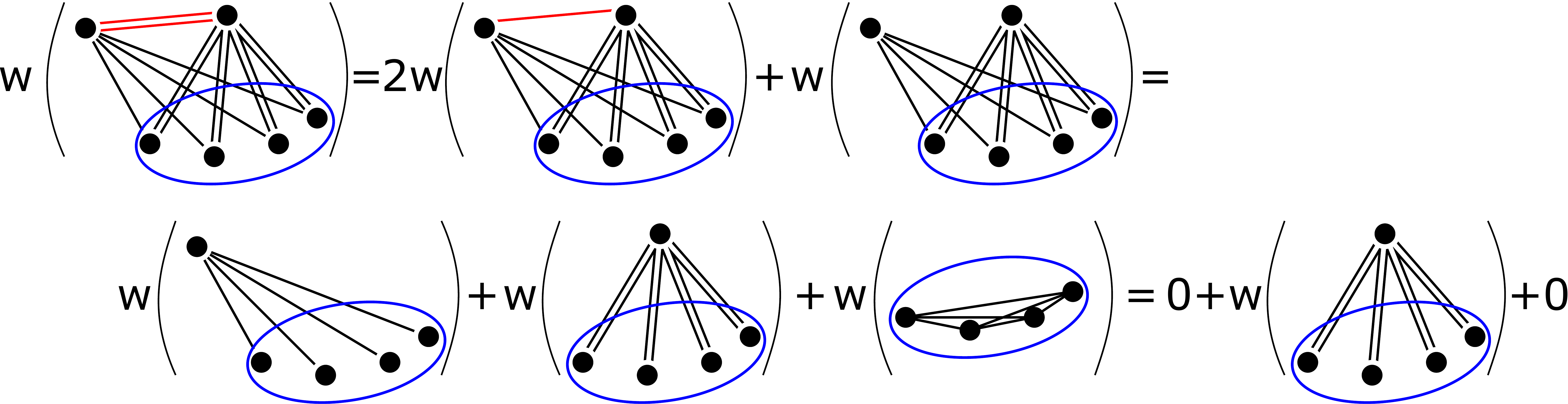}
\end{center}
\caption{Calculation with the starry sky multiplicity function. An ellipse is a complete subgraph with single edges.}\label{figStarArithmetics}
\end{figure}

\begin{lem}\label{lemConGraphsStronger}
Let $\ma\colon {V\choose 2}\to\Zo_+$ be an edge multiplicity function such that $G(\ma)$ is connected. Let $V'\subset V$ be a subset such that $G(\ma)|_{V'}$ is connected as well. Then $n(\ma,V)\geqslant n(\ma,V')$.
\end{lem}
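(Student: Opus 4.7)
The plan is to prove the inequality by induction on $|V\setminus V'|$, with the trivial base case $V=V'$. The main engine will be the single-vertex-removal estimate $n(\ma,V)\geqslant n(\ma,V\setminus\{u\})$ obtained from Lemma~\ref{lemInductiveCalculation}, combined with a graph-theoretic reduction that lets us peel off vertices from $V\setminus V'$ one at a time without destroying connectedness of $G(\ma)$.

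First I would establish the following auxiliary claim: if $H$ is a connected graph on vertex set $V$ and $V'\subsetneq V$ is a subset with $H|_{V'}$ connected, then some $u\in V\setminus V'$ satisfies: $H|_{V\setminus\{u\}}$ is still connected. To see this, contract $V'$ to a single vertex $*$ in $H$, obtaining a connected auxiliary graph $H_1$ on $(V\setminus V')\cup\{*\}$ with at least two vertices. Any spanning tree of $H_1$ has at least two leaves, and at most one of them is $*$, so some leaf $u\in V\setminus V'$ is removable from $H_1$ without disconnecting it. Then $H|_{V\setminus\{u\}}$ is connected as well, since $H|_{V'}$ is connected by assumption and $H_1\setminus\{u\}$ being connected means every remaining vertex in $V\setminus V'$ reaches $V'$ without passing through $u$.

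Now I would carry out the induction step. Apply the claim to $H=G(\ma)$ to pick $u\in V\setminus V'$ such that $G(\ma)|_{V\setminus\{u\}}$ is still connected. Note $V'\subseteq V\setminus\{u\}$, so by the induction hypothesis $n(\ma,V\setminus\{u\})\geqslant n(\ma,V')$. It remains to show $n(\ma,V)\geqslant n(\ma,V\setminus\{u\})$. Since $u$ has a neighbor in $G(\ma)$ (as $G(\ma)$ is connected and has at least two vertices), choose $w\in V\setminus\{u\}$ with $r:=\ma(\{u,w\})\geqslant 2$. Applying Lemma~\ref{lemInductiveCalculation} to the edge $e=\{u,w\}$ yields
\[
n(\ma,V)=r\,n(\ma_1,V)+(r-1)\bigl[n(\ma,V\setminus\{u\})+n(\ma,V\setminus\{w\})+n(\ma,V\setminus\{u,w\})\bigr].
\]
All terms on the right are non-negative (each $n(\cdot,\cdot)$ counts spheres in a wedge decomposition by Theorem~\ref{thmWedgeOfSpheres}) and $r-1\geqslant 1$, so this gives $n(\ma,V)\geqslant(r-1)\,n(\ma,V\setminus\{u\})\geqslant n(\ma,V\setminus\{u\})$, completing the induction.

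The main obstacle I anticipate is the auxiliary graph-theoretic claim: one must find a removable vertex \emph{outside} $V'$, and a naive argument using leaves of an arbitrary spanning tree of $G(\ma)$ can fail because all leaves might lie in $V'$. The contraction trick circumvents this; once it is in place, everything else is a direct application of Lemma~\ref{lemInductiveCalculation} and non-negativity.
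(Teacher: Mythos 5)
Your proof is correct and follows essentially the same route as the paper: reduce to the case $|V\setminus V'|=1$ and apply Lemma~\ref{lemInductiveCalculation} to an edge of $G(\mu)$ incident to the extra vertex, using non-negativity of all the wedge counts. The paper dispatches the reduction with a bare ``without loss of generality''; your contraction/spanning-tree argument (peeling vertices off $V$ rather than growing $V'$ one adjacent vertex at a time) is a valid, if slightly more elaborate, way to justify that step.
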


\begin{proof}
Without loss of generality we may assume $|V|-|V'|=1$, that is $V\setminus V'$ is a single vertex $v$. The connectedness condition implies there is an edge $e$ of $G(\ma)$ from $v$ to some vertex in $V'$. Since $\ma(e)\geqslant 2$, the required inequality follows from Lemma~\ref{lemInductiveCalculation} applied to the edge $e$.
\end{proof}

\begin{rem}
Note that Lemma~\ref{lemConGraphsStronger} does not follow from Proposition~\ref{propMonot}. In proposition we deal with edge multiplicity functions on the same set $V$, while lemma allows to compare the invariants $n(\ma,V)$ for sets $V$ of different cardinalities.
\end{rem}

\begin{prop}\label{propContractibleCriterion}
$(\Delta_V)_\ma$ is contractible if and only if $G(\ma)$ has a disjoint vertex.
\end{prop}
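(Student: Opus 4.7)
The ``if'' direction is precisely Corollary~\ref{corIfDisjVertexThenContractible}, so the plan focuses on the converse: assuming $G(\ma)$ has no disjoint vertex, I need to show $(\Delta_V)_\ma$ is not contractible. By Theorem~\ref{thmWedgeOfSpheres}, $(\Delta_V)_\ma$ is a wedge of $(|V|-1)$-spheres, so contractibility is equivalent to $n(\ma,V)=0$, and it suffices to establish $n(\ma,V)\geqslant 1$.

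The first step is to reduce to the case where $G(\ma)$ is connected on $V$. Let $V_1,\ldots,V_k$ denote the connected components of $G(\ma)$; the hypothesis forces $|V_i|\geqslant 2$ for each $i$. All edges of $\Delta_V$ between distinct components have multiplicity $1$, so iterating Proposition~\ref{propComponents} yields
\[
(\Delta_V)_\ma \cong (\Delta_{V_1})_{\ma_1}\ast \cdots \ast (\Delta_{V_k})_{\ma_k}.
\]
By Theorem~\ref{thmWedgeOfSpheres} each factor is a wedge of spheres, and the identity $S^a\ast S^b\simeq S^{a+b+1}$ together with the distributivity of join over wedge sums of well-pointed spaces gives the multiplicative formula $n(\ma,V)=\prod_{i=1}^k n(\ma_i,V_i)$. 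Thus it is enough to show $n(\ma_i,V_i)\geqslant 1$ for every component $V_i$.

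Fix such a component $V_i$ and pick any edge $e=\{u,w\}$ of $G(\ma)|_{V_i}$; such an edge exists because $G(\ma)|_{V_i}$ is connected on at least two vertices. Set $V''=\{u,w\}$. The restriction $G(\ma)|_{V''}$ is the single edge $e$, hence connected. Direct inspection identifies $(\Delta_{V''})_\ma$ with the multigraph on two vertices joined by $\ma(e)$ parallel edges, whose geometric realization is homotopy equivalent to $\bigvee^{\ma(e)-1}S^1$, so $n(\ma,V'')=\ma(e)-1\geqslant 1$. Applying Lemma~\ref{lemConGraphsStronger} to the pair $V''\subset V_i$ (both inducing connected subgraphs of $G(\ma)$) yields $n(\ma_i,V_i)\geqslant n(\ma,V'')\geqslant 1$, which completes the proof.

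The main obstacle is the multiplicativity step $n(\ma,V)=\prod_i n(\ma_i,V_i)$: one must check carefully that joining a wedge of $r$ copies of $S^a$ with a wedge of $s$ copies of $S^b$ is homotopy equivalent to a wedge of $rs$ copies of $S^{a+b+1}$. This is standard for well-pointed CW complexes but deserves a brief justification. Alternatively, one can avoid the counting altogether by invoking the Künneth-style formula $\widetilde{H}_n(X\ast Y;\ko)\cong\bigoplus_{i+j=n-1}\widetilde{H}_i(X;\ko)\otimes\widetilde{H}_j(Y;\ko)$, which shows that a join of non-contractible CW complexes is non-contractible; this directly concludes once each factor $(\Delta_{V_i})_{\ma_i}$ is shown non-contractible by the argument of the previous paragraph.
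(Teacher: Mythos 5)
Your proposal is correct and follows essentially the same route as the paper: reduce to connected components of $G(\ma)$ via Proposition~\ref{propComponents} and the multiplicativity $n(\ma,V)=\prod_i n(\ma,V_i)$, then bound each factor from below by $n(\ma,\{u,w\})=\ma(e)-1\geqslant 1$ for a single edge of $G(\ma)$ using Lemma~\ref{lemConGraphsStronger}. Your extra care about justifying the multiplicativity of the sphere count under joins (or bypassing it with the join Künneth formula) is a reasonable refinement of a step the paper states without comment, but it is not a different argument.
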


\begin{proof}
One direction is already proved in Corollary~\ref{corIfDisjVertexThenContractible}. Now assume that all vertices of $G(\ma)$ have degree at least 1. Let $\{F_i\}$ be connected components of $G(\ma)$ on the vertex sets $V_i$. Each $F_i$ has at least 2 vertices. Hence $F_i$ contains a connected subgraph $F_i'$ which consists of a single edge $e_i=\{v_i,w_i\}$. Since $\ma(e_i)\geqslant 2$, we have $n(\ma,\{v_i,w_i\})=\ma(e)-1\geqslant 1$. Applying Lemma~\ref{lemConGraphsStronger} to each connected component $F_i$ we see that \[
n(\ma,V)=\prod_{i}n(\ma,V_i)\geqslant \prod_in(\ma,\{v_i,w_i\})>0.
\]
This proves the statement.
\end{proof}

\begin{prop}\label{propOneSphere}
$(\Delta_V)_\ma$ is homotopy equivalent to a single sphere if and only if $\ma$ is a starry sky function.
\end{prop}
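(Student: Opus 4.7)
The plan is to prove the \emph{only if} direction; the converse is exactly Lemma~\ref{lemStarry}. So I suppose $(\Delta_V)_\ma\simeq S^{|V|-1}$, equivalently $n(\ma,V)=1$, and aim to show that $\ma$ is a starry sky function. I expect the main obstacle to be identifying the correct list of forbidden induced subgraphs in $G(\ma)$ and verifying combinatorially that their absence characterizes stars; once that is pinned down, the Betti number computations and the graph-theoretic wrap-up are routine.

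First I would reduce to the case where $G(\ma)$ is connected. Because $n(\ma,V)=1$ rules out contractibility, Proposition~\ref{propContractibleCriterion} guarantees that $G(\ma)$ has no isolated vertex. Let $V=V_1\sqcup\cdots\sqcup V_k$ be the decomposition into the vertex sets of the connected components of $G(\ma)$. By Proposition~\ref{propComponents}, $(\Delta_V)_\ma\cong (\Delta_{V_1})_{\ma_1}\ast\cdots\ast(\Delta_{V_k})_{\ma_k}$, and the elementary identity $\bigvee^{m}S^d\ast\bigvee^{n}S^e\simeq\bigvee^{mn}S^{d+e+1}$ makes the invariant multiplicative over joins, $n(\ma,V)=\prod_i n(\ma,V_i)$, so each $n(\ma,V_i)=1$ and it suffices to prove the statement under the additional hypothesis that $G(\ma)$ is connected with $|V|\geqslant 2$.

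Next I would extract local obstructions. Lemma~\ref{lemConGraphsStronger} propagates the bound $n\leqslant 1$ to every $V'\subseteq V$ whose restricted graph $G(\ma)|_{V'}$ is connected. Taking $V'$ to be a single edge $e$ of $G(\ma)$ gives $\ma(e)-1\leqslant 1$, hence $\ma(e)\leqslant 2$ throughout. Short Euler characteristic computations via~\eqref{eqNumberInWedge}, performed with the canonical multiplicities $2$ on $G(\ma)$-edges and $1$ elsewhere, yield $n=4$ for an induced $K_3$, $n=2$ for an induced $P_4$, and $n=7$ for an induced $C_4$; none of these can therefore appear as an induced subgraph of $G(\ma)$.

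The remaining step is a purely combinatorial claim: a connected graph on $\geqslant 2$ vertices with no induced $K_3$, $P_4$, or $C_4$ is a star. I would prove this by picking a vertex $v$ of maximum degree (so $N(v)$ is independent, by triangle-freeness); if some $u$ lay outside $\{v\}\cup N(v)$, connectedness together with no induced $P_4$ would force the shortest path from $u$ to $v$ to have length exactly $2$, so $u$ has a neighbor in $N(v)$, and two such neighbors would produce an induced $C_4$ on $\{v,w_1,u,w_2\}$, so the neighbor $w(u)$ is unique. Whenever $|N(v)|\geqslant 2$, any $w'\in N(v)\setminus\{w(u)\}$ then makes $w'-v-w(u)-u$ an induced $P_4$, a contradiction; so $V=\{v\}\cup N(v)$ and $G=K_{1,|N(v)|}$ is a star (the marginal case $|N(v)|\leqslant 1$ is dispatched by direct inspection). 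Combining this with the preceding step, every connected component of $G(\ma)$ is a star on at least two vertices with $\ma\leqslant 2$, i.e., $\ma$ is a starry sky function, as required.
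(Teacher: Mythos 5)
Your proof is correct and follows the same overall strategy as the paper's: reduce to a connected $G(\ma)$ via Proposition~\ref{propComponents} and the multiplicativity of $n(\ma,V)$ over joins, use the monotonicity lemmas to rule out local configurations, and then identify the surviving graphs as stars. The differences are worth recording. The paper forbids an edge of multiplicity $\geqslant 3$, a $3$-cycle, and a $4$-path occurring as a (not necessarily induced) subgraph of $G(\ma)$; for the latter two it therefore needs both Lemma~\ref{lemConGraphsStronger} (to pass to a connected vertex subset) and Proposition~\ref{propMonot}/Corollary~\ref{corMonot} (to lower the multiplicities of any extra $G(\ma)$-edges present on that subset) before the direct calculation. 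You work with \emph{induced} subgraphs instead: once $\ma\leqslant 2$ is established, the restriction of $\ma$ to the vertex set of an induced subgraph is already the canonical one, so Lemma~\ref{lemConGraphsStronger} alone suffices and Proposition~\ref{propMonot} is not needed; the price is one extra forbidden configuration, the induced $C_4$, which contains a $P_4$ as a subgraph but not as an induced subgraph. Your Euler-characteristic values ($4$ for the doubled triangle, $2$ for the doubled $4$-path, $7$ for the doubled $4$-cycle) check out against~\eqref{eqNumberInWedge}, and you supply in full the graph-theoretic step the paper leaves implicit, namely that a connected graph on at least two vertices with no $K_3$, no induced $P_4$, and no induced $C_4$ is a star; your maximum-degree argument for that claim is sound, including the vacuous marginal case $|N(v)|\leqslant 1$.
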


\begin{proof}
One direction is already proved in Lemma~\ref{lemStarry}. To prove the other direction it suffices to prove the following statements.
\begin{enumerate}
  \item If there is an edge $e\in{V\choose 2}$ of multiplicity $\ma(e)\geqslant 3$ in a connected graph $G(\ma)$, then $n(\ma,V)>1$.
  \item If a connected graph $G(\ma)$ contains a subgraph isomorphic to a path with $4$ vertices (the 4-path), then $n(\ma,V)>1$.
  \item If a connected graph $G(\ma)$ contains a 3-cycle, then $n(\ma,V)>1$.
\end{enumerate}
If we want $n(\ma,V)$ to be equal to $1$, then (1) implies that all multiplicities should be at most $2$ and each connected component of $G(\ma)$ should be 4-path-free by (2), and 3-cycle-free by (3). This means that each connected component is a star.

The proof of statements (1)-(3) is based on monotonicity properties (Proposition~\ref{propMonot} and Lemma~\ref{lemConGraphsStronger}) and direct calculations shown on Fig.~\ref{figExceptions}.
\end{proof}

\begin{figure}[h]
\begin{center}
\includegraphics[scale=0.2]{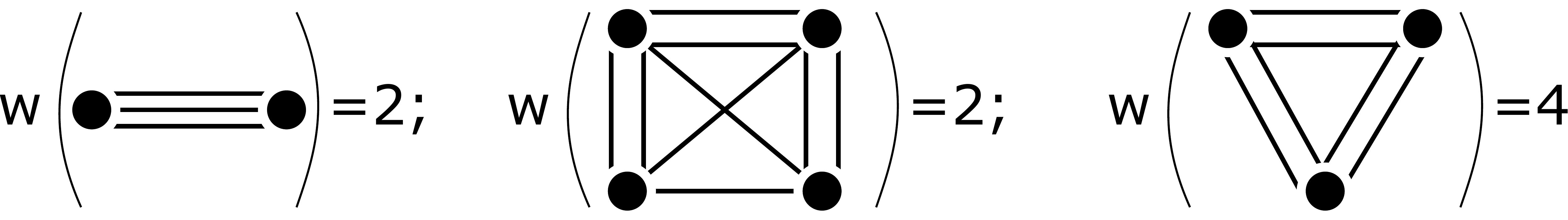}
\end{center}
\caption{Exceptional multigraphs from the proof of Proposition~\ref{propOneSphere}}\label{figExceptions}
\end{figure}

\section{Proof of Theorem~\ref{thmWedgeDecomposition} and related results}\label{secProofOfWedgeDecomposition}

Let us recall the general result of Bj\"{o}rner, Wachs, and Welker, the poset fiber theorem.

\begin{thm}[{\cite[Thm.1.1]{BjWW}}]\label{thmFiberThm}
Let $f\colon P\to Q$ be a morphism of posets such that for all $q\in Q$ the fiber $|f^{-1}(Q_{\leqslant q})|$ is $\dim |f^{-1}(Q_{<q})|$-connected. If $|Q|$ is connected, then there is a homotopy equivalence
\begin{equation}\label{eqWedgeGeneral}
|P|\simeq |Q|\vee\bigvee_{q\in Q} |f^{-1}(Q_{\leqslant q})|\ast |Q_{>q}|.
\end{equation}
\end{thm}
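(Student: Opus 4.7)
\textbf{Proof plan for Theorem~\ref{thmFiberThm}.}

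The strategy is to present $|P|$ as a homotopy colimit indexed by $Q$ whose value at $q$ is the fiber $F(q):=|f^{-1}(Q_{\leqslant q})|$, and then invoke a wedge lemma for homotopy colimits of sufficiently connected diagrams. The approach is the standard BWW pattern: projection lemma, wedge lemma, match connectivity.

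\textbf{Step 1 (Homotopy colimit presentation of $|P|$).} For $q\leqslant q'$ in $Q$ the inclusion $F(q)\hookrightarrow F(q')$ is a CW subcomplex inclusion, hence a cofibration. This assembles into a functor $F\colon Q\to\Top$. Because every chain $p_0<\dots<p_k$ in $P$ is contained in $f^{-1}(Q_{\leqslant f(p_k)})$, the ordinary colimit of $F$ equals $|P|$. The diagram is indexed by a poset and consists entirely of cofibrations, so it is \emph{cofibrant} in the projective sense, and the canonical map
\[
\hocolim_Q F \;\longrightarrow\; \colim_Q F \;=\; |P|
\]
is a homotopy equivalence (projection lemma).

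\textbf{Step 2 (Wedge lemma).} The core technical input is the following wedge lemma that I would prove separately: if $D\colon Q\to\Top$ is a diagram over a connected poset $Q$ and each $D(q)$ is $\dim|f^{-1}(Q_{<q})|$-connected, then
\[
\hocolim_Q D \;\simeq\; |Q| \vee \bigvee_{q\in Q} D(q)*|Q_{>q}|.
\]
I would prove this by induction on $|Q|$. For the inductive step, pick a minimal element $q_0\in Q$ and set $Q'=Q\setminus\{q_0\}$. The standard mapping telescope description exhibits $\hocolim_Q D$ as the pushout
\[
\hocolim_Q D \;\simeq\; \bigl(D(q_0)\times \cone|Q_{>q_0}|\bigr)\;\cup_{D(q_0)\times|Q_{>q_0}|}\; \hocolim_{Q'} D .
\]
By induction $\hocolim_{Q'} D\simeq |Q'|\vee\bigvee_{q>q_0} D(q)*|Q_{>q}|$. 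The attaching map $D(q_0)\times|Q_{>q_0}|\to\hocolim_{Q'} D$ lands in the subcomplex built from $F$ restricted to $Q_{>q_0}$, whose dimension is at most $\dim|f^{-1}(Q_{<q_0})|$. The connectivity hypothesis on $D(q_0)$ therefore forces the map $D(q_0)\to\hocolim_{Q'} D$ (with $|Q_{>q_0}|$-parameter frozen) to be nullhomotopic. Contracting $D(q_0)$ in the target converts the pushout into a wedge, producing the new summand $D(q_0)*|Q_{>q_0}|$ (from the cone attachment with $D(q_0)$ collapsed) and upgrading $|Q'|$ to $|Q|$ by the additional star $\cone|Q_{>q_0}|$ of $q_0$.

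\textbf{Step 3 (Application).} Apply the wedge lemma to $F$. The hypothesis of Theorem~\ref{thmFiberThm} says exactly that $F(q)=|f^{-1}(Q_{\leqslant q})|$ is $\dim|f^{-1}(Q_{<q})|$-connected, which is the connectivity required in Step 2. The conclusion is
\[
|P|\;\simeq\; \hocolim_Q F \;\simeq\; |Q|\vee\bigvee_{q\in Q}|f^{-1}(Q_{\leqslant q})|*|Q_{>q}|,
\]
as claimed.

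\textbf{Main obstacle.} The serious work is in the wedge lemma, specifically in the inductive step: one must identify precisely the subcomplex of the partially built homotopy colimit that receives the attaching map and verify that its dimension is controlled by $\dim|f^{-1}(Q_{<q_0})|$ (rather than the weaker bound $\dim|Q_{<q_0}|$), so that the given connectivity is exactly enough to produce the needed nullhomotopy. A secondary bookkeeping point is that the nullhomotopies chosen at each inductive stage must be compatible with the basepoints of the wedge already constructed, so that the final decomposition is an honest wedge rather than a more complicated attaching. Once these points are handled, the rest of the argument is formal manipulation of homotopy pushouts and joins.
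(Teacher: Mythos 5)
Your Steps 1 and 3 follow the same architecture as the source this theorem is quoted from (and as the sketch the paper itself spells out inside the proof of the functorial version, Theorem~\ref{thmFuncFiberThm}): present $|P|$ as $\colim$ of the fiber diagram $D(q)=|f^{-1}(Q_{\leqslant q})|$, pass to $\hocolim$ by the Projection Lemma, and then decompose the homotopy colimit. Where the paper (following Bj\"orner--Wachs--Welker) replaces $D$ by the homotopy equivalent diagram $\tD$ with the same spaces but \emph{constant} structure maps and then quotes the Wedge Lemma of Welker--Ziegler--\v{Z}ivaljevi\'c, you propose to prove the wedge decomposition directly by induction on $|Q|$, peeling off a minimal element. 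That is a legitimate alternative in principle, but your inductive step invokes the connectivity hypothesis in the wrong place. For a \emph{minimal} element $q_0$ we have $Q_{<q_0}=\varnothing$, so the hypothesis at $q_0$ says only that $|f^{-1}(Q_{\leqslant q_0})|$ is $(-1)$-connected, i.e.\ nonempty; it cannot produce any nullhomotopy. Likewise the subcomplex receiving the attaching map is $\hocolim_{Q_{>q_0}}D$, whose dimension is certainly not bounded by $\dim|f^{-1}(Q_{<q_0})|=-1$. The nullhomotopy you need must come from the hypotheses at the elements \emph{above} $q_0$: for each $q>q_0$ the structure map $D(q_0)\to D(q)$ factors as $D(q_0)\hookrightarrow|f^{-1}(Q_{<q})|\hookrightarrow D(q)$, and it is the $\dim|f^{-1}(Q_{<q})|$-connectivity of $D(q)$ against the dimension of the middle space that kills it. You have the source and target of the connectivity argument interchanged.

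Second, what you call a ``secondary bookkeeping point'' --- choosing the nullhomotopies of $D(q_0)\to D(q)$ for all $q>q_0$ compatibly over the simplices of $|Q_{>q_0}|$ --- is the main technical content of the step, not bookkeeping: each $D(q_0)\to D(q)$ is nullhomotopic separately, but the attaching map $D(q_0)\times|Q_{>q_0}|\to\hocolim_{Q_{>q_0}}D$ must be deformed \emph{globally} so that the $D(q_0)$ factor collapses, and that requires a coherent family of homotopies. This is exactly what the diagram replacement $D\simeq\tD$ accomplishes, after which the WZZ Wedge Lemma applies formally to the constant diagram. There is also a smaller defect in the induction itself: $Q\setminus\{q_0\}$ need not be connected even when $Q$ is, so the inductive hypothesis in the form $|Q'|\vee\bigvee(\cdots)$ is not directly available. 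I would recommend abandoning the ad hoc induction and writing the argument as the paper does: $|P|=\colim D\xleftarrow{\ \simeq\ }\hocolim D\xrightarrow{\ \simeq\ }\hocolim\tD\xrightarrow{\ \simeq\ }|Q|\vee\bigvee_{q\in Q}\bigl(D(q)\ast|Q_{>q}|\bigr)$, with the connectivity hypothesis entering only in the middle equivalence.
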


If $Q$ is not connected, this result applies separately to each connected component of $Q$.

We use this result to prove Theorem~\ref{thmWedgeDecomposition}. The idea is essentially the same as the one used in the study of inflated (or vertex-inflated) complexes~\cite[Sect.6]{BjWW}, see Definition~\ref{definVertexInflation} below.

\begin{proof}[Proof of Theorem~\ref{thmWedgeDecomposition}]
Consider the map $f\colon S_\ma\to S$, which maps an element $(I,c_I)$ to $I$ (collapses all copies of $I$ back to a single simplex). The preimage of any vertex is a single point. Now let $I\in S$ be a simplex of dimension at least $1$ with the vertex set $V(I)$, and let $\ma_I$ be the restriction of $\ma$ to the edges of $I$. Then the fiber $f^{-1}(S_{\leqslant I})$ coincides with the poset $(\Delta_{V(I)})_{\ma_I}$. This poset is homotopy equivalent to a wedge of $\dim I$-dimensional spheres by Theorem~\ref{thmWedgeOfSpheres}, hence it is $(\dim I-1)$-connected. Therefore, we are in position to apply Theorem~\ref{thmFiberThm}. The wedge decomposition of this theorem asserts
\[
|S_\ma|\simeq |S|\vee\bigvee_{I\in S}|(\Delta_{V(I)})_{\ma_I}|\ast |S_{>I}|.
\]
It remains to observe the following.
\begin{enumerate}
  \item If $I$ is a vertex, then $|(\Delta_{V(I)})_{\ma_I}|$ is also a single vertex, hence the corresponding summands of the wedge decomposition disappear.
  \item $S_{>I}=\lk_SI$ by the definition of a link in a simplicial poset.
  \item $|(\Delta_{V(I)})_{\ma_I}|\simeq (S^{\dim I})^{\vee n(\ma_I,I)}$ by Theorem~\ref{thmWedgeOfSpheres}. Therefore
  \[
  |(\Delta_{V(I)})_{\ma_I}|\ast |S_{>I}|\simeq (\Sigma^{\dim I+1}\lk_SI)^{\vee n(\ma_I,I)}.
  \]
\end{enumerate}
This finishes the proof of Theorem~\ref{thmWedgeDecomposition}.
\end{proof}

Our next goal is to characterize links of simplices in an edge-inflated simplicial poset. For this goal we recall the notion of vertex-inflated simplicial complexes (or posets).

Let $S$ be a simplicial poset on the vertex set $V(S)$. A function $\na\colon V(S)\to \Zo_+$ is called a \emph{vertex multiplicity function}.

\begin{defin}\label{definVertexInflation}
Consider the poset $S_{\na}$ whose elements have the form $(I,d_I)$, where $I\in S$ and $d_I\colon V(I)\to \Zo_+$ is a sheet counting function such that $d_I(v)\in\{1,\ldots,\na(v)\}$ for each $v\in V(S)$. The partial order is introduced on $S_{\na}$ in a natural way: $(I,d_I)\leqslant (J,d_J)$ if $I\leqslant J$ and $d_I$ equals the restriction of $d_J$ to $V(I)\subseteq V(J)$. The poset $S_\na$ is a simplicial poset, called the \emph{vertex-inflation} of $S$.
\end{defin}

The complete analogues of Theorems~\ref{thmWedgeOfSpheres} and~\ref{thmWedgeDecomposition} are proved in~\cite{BjWW}.

\begin{lem}
The vertex inflation $(\Delta_V)_\na$ is homotopy equivalent to a wedge of $m(\nu,V)=\prod_{v\in V}(\nu(v)-1)$ many $(|V|-1)$-dimensional spheres.
\end{lem}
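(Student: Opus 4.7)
The cleanest approach is to identify $(\Delta_V)_\nu$ with an iterated simplicial join. For each $v \in V$, let $D_v$ denote the discrete simplicial complex consisting of $\nu(v)$ points. Enumerating $V = \{v_1, \ldots, v_n\}$, the plan is first to establish a canonical isomorphism of simplicial posets
\[
(\Delta_V)_\nu \cong D_{v_1} \ast D_{v_2} \ast \cdots \ast D_{v_n}.
\]
Unpacking the definitions, a simplex of the vertex inflation is a pair $(I, d_I)$ with $I \subseteq V$ and $d_I$ picking out one copy of each vertex of $I$; this is precisely the same data as a face of the iterated join, which is obtained by choosing at most one point from each $D_v$. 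The identification is immediate once the join is spelled out at the level of posets.

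Once this identification is in place, a standard homotopy-theoretic computation finishes the argument. Each discrete set $D_v$ is homotopy equivalent to a wedge $(S^0)^{\vee(\nu(v)-1)}$ of $0$-spheres. Using the homotopy equivalence $S^a \ast S^b \simeq S^{a+b+1}$ together with the distributivity of the join over wedges for well-pointed spaces, one deduces iteratively that
\[
(\Delta_V)_\nu \simeq D_{v_1} \ast \cdots \ast D_{v_n} \simeq (S^{n-1})^{\vee m(\nu,V)},
\]
with $m(\nu,V) = \prod_{v \in V}(\nu(v) - 1)$ wedge summands. In particular, if some $\nu(v) = 1$ then the corresponding factor $D_v$ is a single point, the join becomes a cone, and the space is contractible; this matches the product formula, which then vanishes.

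The main technical point to justify is the distributivity of the join over wedges. Since each $D_v$ is a finite discrete CW-complex equipped with a distinguished basepoint, the relevant cofibration hypotheses are satisfied, and the equivalence $X \ast (Y \vee Z) \simeq (X \ast Y) \vee (X \ast Z)$ follows from the identification $X \ast Y \simeq \Sigma(X \wedge Y)$ together with distributivity of the smash product over wedges of well-pointed spaces. Alternatively, one can sidestep joins entirely and mimic the proof of Theorem~\ref{thmWedgeOfSpheres}: a double induction on $|V|$ and on $\sum_v \nu(v)$, splitting off one copy of a fixed vertex at each step and gluing the two resulting subcomplexes with Lemma~\ref{lemStackSpheres}. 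Either route yields the claimed wedge decomposition.
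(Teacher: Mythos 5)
Your argument is correct. Note, however, that the paper does not prove this lemma at all: it is stated as a known result, with the sentence immediately preceding it deferring to Bj\"{o}rner--Wachs--Welker (``The complete analogues of Theorems 1 and 2 are proved in [BjWW]''). So there is no in-paper proof to compare against; what you have supplied is essentially the standard argument, and it is also the one used in the cited source (Section 6 of that paper, on inflated complexes): the vertex inflation $(\Delta_V)_\nu$ is, face by face, exactly the iterated join $D_{v_1}\ast\cdots\ast D_{v_n}$ of discrete sets of sizes $\nu(v_i)$, each $D_v$ is literally a wedge of $\nu(v)-1$ copies of $S^0$ once a basepoint is chosen, and the equivalences $X\ast Y\simeq \Sigma(X\wedge Y)$ together with distributivity of $\wedge$ over $\vee$ for well-pointed CW-complexes give $(S^{n-1})^{\vee\prod_v(\nu(v)-1)}$. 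Your treatment of the degenerate case $\nu(v)=1$ (the join becomes a cone, matching the vanishing of the product) is also right. The only thing worth making explicit is that the join identification uses the fact that a sheet-counting function $d_I$ on a face of $\Delta_V$ is independent data for each vertex of $I$ --- which is what fails for \emph{edge} inflations and is why Theorem 1 of the paper needs the separate inductive argument via Lemma 4.1 that you mention as your alternative route. Either of your two routes is fine; the join argument is cleaner here precisely because the inflation data is per-vertex.
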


\begin{prop}\label{propWedgeForVertexInflation}
Let $S$ be a connected simplicial poset. Then, for any vertex-multiplicity function $\na\colon V(S)\to\Zo_+$, there is a homotopy equivalence
\[
|S_\na|\simeq |S|\vee \bigvee_{I\in S}(\Sigma^{\dim I+1}|\lk_SI|)^{\vee m(\na_I,V(I))}.
\]
\end{prop}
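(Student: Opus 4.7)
The plan is to mimic almost verbatim the proof of Theorem~\ref{thmWedgeDecomposition}, replacing Theorem~\ref{thmWedgeOfSpheres} with the wedge-of-spheres lemma for vertex inflations of a simplex quoted just above. First I would define the collapse map $f\colon S_\na\to S$ by $(I,d_I)\mapsto I$; this is a morphism of posets. For each simplex $I\in S$, the lower fiber $f^{-1}(S_{\leqslant I})$ is naturally identified with the vertex-inflation $(\Delta_{V(I)})_{\na_I}$ of the simplex on $V(I)$, where $\na_I$ denotes the restriction of $\na$ to $V(I)$. By the cited lemma, this fiber is homotopy equivalent to a wedge of $m(\na_I,V(I))$ spheres of dimension $\dim I$, and hence $(\dim I-1)$-connected. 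Since $\dim|f^{-1}(S_{<I})|=\dim I-1$, the connectivity hypothesis of Theorem~\ref{thmFiberThm} is satisfied.

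Next, applying the poset fiber theorem to $f$ (under the standing assumption that $S$ is connected) yields
\[
|S_\na|\simeq |S|\vee\bigvee_{I\in S}|(\Delta_{V(I)})_{\na_I}|\ast |S_{>I}|.
\]
Using that $S_{>I}=\lk_SI$ and the standard fact that the join of a wedge of $r$ many $k$-spheres with a space $X$ is homotopy equivalent to $(\Sigma^{k+1}X)^{\vee r}$, each summand on the right becomes $(\Sigma^{\dim I+1}|\lk_SI|)^{\vee m(\na_I,V(I))}$. Assembling these gives the claimed decomposition.

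Unlike in Theorem~\ref{thmWedgeDecomposition}, the wedge here runs over all simplices $I\in S$ and not only those of dimension at least one: under vertex inflation, vertices of multiplicity greater than one contribute nontrivial summands $\Sigma|\lk_Sv|$, and as soon as some $v\in V(I)$ has $\na(v)=1$ the factor $m(\na_I,V(I))=\prod_{v\in V(I)}(\na(v)-1)$ vanishes, so the corresponding summand disappears automatically. There is no substantial obstacle in this argument: all ingredients (the poset fiber theorem, the wedge-of-spheres result for $(\Delta_V)_\na$, and the join-with-wedge-of-spheres calculation) are either cited or already developed. The only points requiring care are the precise identification of the fiber $f^{-1}(S_{\leqslant I})$ with a vertex-inflated simplex and the dimension/connectivity bookkeeping needed to invoke Theorem~\ref{thmFiberThm}.
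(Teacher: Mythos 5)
Your proof is correct and is essentially the approach the paper intends: the paper does not spell out a proof of Proposition~\ref{propWedgeForVertexInflation} but cites \cite{BjWW}, and its own proof of Theorem~\ref{thmWedgeDecomposition} is exactly the argument you mimic (collapse map, identification of fibers with inflated simplices, poset fiber theorem, join-with-wedge-of-spheres). Your remark about why vertices do contribute here (and why summands with some $\na(v)=1$ vanish automatically) is accurate bookkeeping, not a gap.
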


The constructions of vertex-inflation and edge-inflation can be naturally combined.

\begin{con}
Let $S$ be a simplicial poset, $\na\colon V(S)\to\Zo_+$ be a vertex multiplicity function, and $\ma\colon E(S)\to \Zo_+$ be an edge multiplicity function. First consider the edge-inflated poset $S_\ma$. Its vertex set naturally coincides with that of $S$: $V(S_\ma)=V(S)$. Therefore, we have a vertex multiplicity function $\na\colon V(S_\ma)=V(S)\to\Zo_+$ on the poset $S_\ma$. Hence we have a well defined simplicial poset $S_{\ma,\na}=(S_\ma)_\na$. Combining the definitions of a vertex-inflation and an edge-inflation we get the following characterization of simplices of $S_{\ma,\na}$. Elements of $S_{\ma,\na}$ have the form $(I,d_I,c_I)$ where $I\in S$, $d_I\colon V(I)\to \Zo_+$, $c_I\colon E(I)\to \Zo_+$ such that $d_I(v)\in[\na(v)]$ and $c_I(v)\in[\ma(v)]$. The partial order is  $(I,d_I,c_I)\leqslant (J,d_J,c_J)$ if $I\leqslant J$ and $(d_J)|_{V(I)}=d_I$ and $(c_J)|_{E(I)}=c_I$.
\end{con}

Next lemma gives a description of links in edge-inflated simplicial posets in terms of links of the original poset. Let $I\in S$ be a simplex and $\lk_SI$ be its link in the sense of Definition~\ref{definLinkInPoset}. In order to relate the elements (vertices and edges) of $\lk_SI$ to the corresponding elements of $S$ we introduce two maps: the map $\rest\colon V(\lk_SI)\to V(S)$ and the map $\rest\colon E(\lk_SI)\to E(S)$. Note that the vertex of $\lk_SI$ is a simplex $J\in S$ such that $I<J$ and $\dim J=\dim I+1$. There is a unique complementary vertex $v=J\setminus I\in V(S)$. We set $\rest(J)=J\setminus I$. Similarly we define $\rest\colon E(\lk_SI)\to E(S)$ by $\rest(J)=J\setminus I$ where $\dim J-\dim I=2$ in this case.

\begin{lem}\label{lemLinkOfEdgeInflated}
Let $I\in S$ be a simplex, $\ma\colon E(S)\to\Zo_+$ be an edge multiplicity function, and $c_I\colon E(I)\to \Zo_+$ be an edge counting function on $I$. Then
\[
\lk_{S_\ma}(I,c_I)\cong (\lk_SI)_{\tilde{\ma},\tilde{\na}}
\]
where $\tilde{\ma}\colon E(\lk_SI)\to\Zo_+$ is the composition of $\rest\colon E(\lk_SI)\to E(S)$ with $\ma\colon E(S)\to\Zo_+$, and $\tilde{\na}\colon V(\lk_SI)\to\Zo_+$ takes the values
\[
\tilde{\na}(J)=\prod_{e\in E(J)\setminus E(I)}\ma(e)
\]
for each $J>I$, $\dim J-\dim I=1$ (i.e. $J\in V(\lk_SI)$).
\end{lem}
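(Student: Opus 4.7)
The plan is to construct an explicit isomorphism of simplicial posets by decomposing the sheet function on the left-hand side into the vertex and edge data on the right-hand side, and then checking that order is preserved.

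First I would unpack both sides. An element of $\lk_{S_\ma}(I,c_I)$ is a pair $(K,c_K)$ with $K>I$ in $S$ and $c_K\colon E(K)\to\Zo_+$ restricting to $c_I$ on $E(I)$; equivalently, $K\in \lk_S I$ together with a free choice of $c_K(e)\in[\ma(e)]$ for each $e\in E(K)\setminus E(I)$. The edges in $E(K)\setminus E(I)$ split into ``mixed'' edges with exactly one endpoint in $V(I)$ and ``new'' edges with both endpoints in $V(K)\setminus V(I)$. On the other side, a simplex of $(\lk_S I)_{\tilde{\ma},\tilde{\na}}$ is a triple $(J,d_J,c_J)$, where $J\in\lk_S I$ and $d_J$, $c_J$ are labellings of the vertex and edge sets of $J$ viewed as a simplex of the simplicial poset $\lk_S I$. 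Via the restriction maps, these correspond respectively to the new vertices and new edges of $J$ inside $S$.

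Next I would define the map. Send $(K,c_K)$ to $(J,d_J,c_J)$ with $J=K$, setting $c_J(e')=c_K(\rest e')$ for every edge $e'$ of $K$ in $\lk_S I$. For each vertex $J'$ of $K$ in $\lk_S I$, with complementary vertex $u=\rest J'\in V(S)$, the mixed sheet data on the edges joining $u$ to $V(I)$ form an element of $\prod_{v\in V(I)}[\ma(\{u,v\})]$, a set of cardinality exactly $\tilde{\na}(J')$. Fix once and for all, for each $J'\in V(\lk_S I)$, a bijection
\[
\phi_{J'}\colon \prod_{v\in V(I)}[\ma(\{u,v\})]\toiso [\tilde{\na}(J')],
\]
and set $d_J(J')=\phi_{J'}\bigl((c_K(\{u,v\}))_{v\in V(I)}\bigr)$. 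Inverting $\phi_{J'}$ and reading off $c_J$, $d_J$ yields the inverse map, so the correspondence is a bijection on simplices; a quick count confirms that both sides have the same number of elements above $K$, namely $\prod_{e\in E(K)\setminus E(I)}\ma(e)$.

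Finally, I would verify that the bijection preserves order. If $(K_1,c_{K_1})\leqslant(K_2,c_{K_2})$ above $(I,c_I)$, then $K_1\leqslant K_2$ in $S$ and $c_{K_2}$ restricts to $c_{K_1}$ on $E(K_1)$; splitting the restriction by edge type recovers $c_{J_2}$ restricting to $c_{J_1}$ on new edges and $d_{J_2}$ restricting to $d_{J_1}$ on new vertices, using crucially that each $\phi_{J'}$ was chosen independently of $K$. The converse implication is the same argument read backwards. The main bookkeeping obstacle is precisely this last point: the indexing bijections $\phi_{J'}$ must be attached to the vertices of $\lk_S I$, not to the individual pairs $(K,c_K)$, so that restriction on one side translates into restriction on the other; once this is arranged, everything else is a routine unwinding of the definitions.
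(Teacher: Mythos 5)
Your proposal is correct and is essentially a careful elaboration of the paper's one-line argument (the paper simply says the lemma "follows from counting all copies of simplices containing $(I,c_I)$"): you make the count into an explicit order-preserving bijection by splitting the sheet data on $E(K)\setminus E(I)$ into the new edges (giving $c_J$) and the mixed edges grouped by vertex of $\lk_S I$ (giving $d_J$ via the bijections $\phi_{J'}$). Your observation that the $\phi_{J'}$ must be fixed per vertex of $\lk_S I$, independently of $K$, is exactly the point needed for compatibility with restrictions, and the rest checks out.
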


This easily follows from counting all copies of simplices in $S_\ma$, containing the given simplex $(I,c_I)$. We now recall several notions from algebraic combinatorics.

\begin{defin}
Simplicial poset $S$ is called \emph{homotopically Cohen--Macaulay} (homotopically CM) if it is pure of dimension $n-1$, and the following conditions hold
\begin{enumerate}
  \item $S$ is homotopy equivalent to a wedge of $(n-1)$-dimensional spheres.
  \item For each simplex $I\in S$, the link $\lk_SI$ is homotopy equivalent to a wedge of $(n-2-\dim I)$-dimensional spheres.
\end{enumerate}
\end{defin}

\begin{defin}
Simplicial poset $S$ is called \emph{Cohen--Macaulay} (CM) over the base module $R$ if it is pure of dimension $n-1$, and the following conditions hold
\begin{enumerate}
  \item $\Hr_j(S;R)=0$ for $j<n-1$.
  \item $\Hr_j(\lk_SI;R)=0$ for each simplex $I\in S$ and $j<n-2-\dim I$.
\end{enumerate}
\end{defin}

We apply Lemma~\ref{lemLinkOfEdgeInflated} to prove the following Proposition, which is similar in spirit to the Cohen--Macaulay fiber theorem~\cite[Thm.5.1]{BjWW}.

\begin{prop}\label{propCMinherited}
Let $S$ be a CM simplicial poset and $\ma\colon E(S)\to\Zo_+$ be an edge multiplicity function. Then $S_\ma$ is also CM. The same statement holds for homotopically Cohen--Macaulay property.
\end{prop}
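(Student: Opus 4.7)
The plan is to verify the two defining conditions of (homotopical) Cohen--Macaulay for $S_\ma$ directly, using the homotopy wedge decomposition (Theorem~\ref{thmWedgeDecomposition}) together with its homological shadow (Corollary~\ref{corComputational}), and the link description provided by Lemma~\ref{lemLinkOfEdgeInflated}. First I would check purity: each facet $I$ of $S$ of dimension $n-1$ gives rise to $\prod_{e\in E(I)}\ma(e)$ facets of $S_\ma$ of the same dimension, and every element of $S_\ma$ is below one of them since $S$ is pure; hence $S_\ma$ is pure of dimension $n-1$.

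For the global condition, I would plug the CM vanishing of $S$ into Corollary~\ref{corComputational}:
\[
\Hr_*(S_\ma;R)\cong \Hr_*(S;R)\oplus \bigoplus_{I\in S,\,\dim I\geqslant 1} n(\ma_I,I)\cdot \Hr_{*-\dim I-1}(\lk_SI;R).
\]
For $j<n-1$, the first summand vanishes by CM of $S$, and for each $I$ with $\dim I\geqslant 1$ one has $j-\dim I-1<n-2-\dim I=\dim\lk_SI$, so the corresponding summand vanishes by the link condition of CM for $S$. For the homotopical version I would instead appeal to Theorem~\ref{thmWedgeDecomposition}: each wedge factor $\Sigma^{\dim I+1}|\lk_SI|$ is a wedge of spheres of dimension $(n-2-\dim I)+(\dim I+1)=n-1$, so $|S_\ma|$ is a wedge of $(n-1)$-spheres.

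For the link condition, Lemma~\ref{lemLinkOfEdgeInflated} gives $\lk_{S_\ma}(I,c_I)\cong (\lk_SI)_{\tilde{\ma},\tilde{\na}}$, a combined edge- and vertex-inflation of the CM poset $\lk_SI$ of dimension $n-2-\dim I$. I would therefore prove, by induction on the dimension $d$, the auxiliary statement: if $T$ is (homotopically) CM of dimension $d$, then any combined inflation $T_{\tilde\ma,\tilde\na}=(T_{\tilde\ma})_{\tilde\na}$ is (homotopically) CM of dimension $d$. The inductive step applies Proposition~\ref{propWedgeForVertexInflation} to $T_{\tilde\ma}$ with multiplicity $\tilde\na$, reducing the global vanishing for $T_{\tilde\ma,\tilde\na}$ to (i) vanishing of $\Hr_j(T_{\tilde\ma})$ for $j<d$, which is handled by Corollary~\ref{corComputational} and the CM property of $T$ exactly as in the previous paragraph, and (ii) vanishing of $\Hr_{j-\dim J-1}(\lk_{T_{\tilde\ma}}J)$, where Lemma~\ref{lemLinkOfEdgeInflated} rewrites each such link as a combined inflation of $\lk_TJ'$ for some $J'\in T$ of the same dimension as $J$, whose CM property follows from the inductive hypothesis in strictly lower dimension $d-1-\dim J<d$. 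The link condition for $T_{\tilde\ma,\tilde\na}$ itself is handled by the same mechanism: a link in the vertex-inflated poset is again described by the analogue of Lemma~\ref{lemLinkOfEdgeInflated}, producing a combined inflation of a link of $T$, to which the inductive hypothesis applies directly.

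The main obstacle is the mismatch between what we want (CM of the pure edge-inflation $S_\ma$) and what appears inside links after applying Lemma~\ref{lemLinkOfEdgeInflated}, namely a combined edge/vertex inflation with a nontrivial vertex multiplicity $\tilde{\na}$. This forces the proof to be set up for the combined inflation statement, pairing Corollary~\ref{corComputational} with Proposition~\ref{propWedgeForVertexInflation} in the induction; the remaining work is routine bookkeeping of dimension shifts to confirm that each appeal to an inflation wedge decomposition strictly decreases the dimension of the underlying CM poset.
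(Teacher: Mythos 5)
Your proof is correct and follows essentially the same route as the paper: induction on dimension, with Theorem~\ref{thmWedgeDecomposition} (resp.\ Corollary~\ref{corComputational}) handling the global condition and Lemma~\ref{lemLinkOfEdgeInflated} combined with Proposition~\ref{propWedgeForVertexInflation} handling the links. The only organizational difference is that you strengthen the inductive statement to cover combined edge/vertex inflations, whereas the paper keeps the induction on plain edge inflations and sidesteps the mismatch you identify by observing that every link of a link of $S_\ma$ is itself a link of $S_\ma$ (of a larger simplex), so the vertex-inflation layer only ever needs to be peeled off once, at the top level, via a single application of Proposition~\ref{propWedgeForVertexInflation}.
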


\begin{proof}
We prove the statement for homotopically Cohen--Macaulayness; the homological version is proved similarly. The proof is by induction on $\dim S$. The base $\dim S=0$ is trivially satisfied.

Since $S\simeq \bigvee S^{n-1}$ and $\lk_SI\simeq \bigvee S^{n-2-\dim I}$, the wedge decomposition of Theorem~\ref{thmWedgeDecomposition} states that $S_\ma$ is homotopy equivalent to a wedge of $(n-1)$-spheres.

Now we need to prove that any link in $S_\ma$ is also homotopy equivalent to a wedge of spheres. Let $(I,c_I)\in S_\ma$ be a simplex. By Lemma~\ref{lemLinkOfEdgeInflated} we have
\[
\lk_{S_\ma}(I,c_I)\cong (\lk_SI)_{\tilde{\ma},\tilde{\na}}=((\lk_SI)_{\tilde{\ma}})_{\tilde{\na}}.
\]
Since $S$ is homotopically CM, any link $\lk_SI$ is also homotopically CM by the definition. Therefore $(\lk_SI)_{\tilde{\ma}}$ is homotopically CM by induction hypothesis, in particular it is a wedge of spheres. Now, the wedge decomposition of Proposition~\ref{propWedgeForVertexInflation} tells that the additional vertex inflation $((\lk_SI)_{\tilde{\ma}})_{\tilde{\na}}$ is also homotopy equivalent to a wedge of spheres, which finishes the proof.
\end{proof}

\begin{rem}
We mention here that there is an operation of \emph{replicating} elements in a poset, which was introduced by Baclawski~\cite[Thm.7.3]{Bacl}. He proved that the CM property is preserved under replications. Although this operation seems similar to the operation of edge inflation in a simplicial poset, the latter cannot be reduced to replications in general.
\end{rem}

\section{Functoriality of wedge decompositions and persistent homology}\label{secFunctorialityAndPersistence}

In this section we restrict ourselves to flag complexes of mutigraphs and concentrate on functoriality issues needed in persistence theory. It is assumed that the vertex set $V$ of all multigraphs is fixed.

\begin{defin}
An (increasing) filtration of multigraphs is a sequence of inclusions
\begin{equation}\label{eqMGfiltr}
\tG_1\subset \tG_2\subset \cdots\subset \tG_s
\end{equation}
of multigraphs on the vertex set $V$, together with a nondecreasing sequence of nonnegative real numbers
\[
0\leqslant t_1\leqslant t_2\leqslant \cdots\leqslant t_s.
\]
\end{defin}

A filtration can be encoded by a single multigraph $\tG=\tG_s$ and a function $\tb\colon E(\tG)\to \Ro_+$, which indicates the birth time for each edge of $\tG$.

A filtration of multigraphs naturally induces the filtration of the corresponding flag complexes
\begin{equation}\label{eqFiltrationFlags}
|F.\tG_1|\subset |F.\tG_2|\subset \cdots\subset |F.\tG_s|.
\end{equation}
The latter is a filtration of topological spaces, hence one can define its persistent homology modules and persistence diagrams, see definition and details in~\cite{ZomCarl}.

In this section we develop a theory to compute persistent homology of filtration~\eqref{eqFiltrationFlags} using the wedge decomposition of Theorem~\ref{thmWedgeDecomposition}. The general idea is stated informally in the following construction.

\begin{con}\label{conIdeaOfPersistence}
By sorting birth-times of all edges, we can assume that, in the multigraph filtration~\eqref{eqMGfiltr} exactly one edge $e$ is added at each time moment. There is an alternative.

\begin{enumerate}
  \item The newborn edge $e$ connects two vertices which were previously disconnected. This means that a new edge appears in the underlying simple graph $G$, hence the topology of the underlying simplicial complex $K=F.G$ changes: some homology cycles may born, and some may die. By similar reasons, the topology of links $\lk_KI$ may change (this happens in the case when $e\cup I$ becomes a clique after adding $e$). Hence, some homology cycles may born or die in $\lk_KI$. Via the wedge decomposition~\eqref{eqWedgeInflated}, these changes induce the homotopy type change of $F.\tG$ (this poset is the edge inflation of $K$).
  \item The newborn edge $e$ connects two vertices $v_1,v_2$ which were already connected. In this case, the topologies of the underlying simplicial complex $K=F.G$ and its links do not change. However, the edge multiplicity function increases by 1 at a single position. This means, that the number $n(\ma,I)$ may increase by $\delta_I$ for some simplices $I\in K$ containing $\{v_1,v_2\}$ (we have $\delta_I\geqslant 0$ according to the monotonicity property, see Corollary~\ref{corMonot}). For all such $I$, each living homology of $\lk_KI$ generates $\delta_I$ many copies of itself.
\end{enumerate}
\end{con}

It can be seen that the computation of persistent homology of a filtration~\eqref{eqFiltrationFlags} reduces to the computation of persistent homology of the underlying filtration of simplicial complexes
\[
|F.G_1|\subset |F.G_2|\subset \cdots\subset |F.G_s|
\]
and the links of its simplices up to some arithmetical work with multiplicities. This can be easily parallelized. However, in order to justify the arguments in both items 1 and 2 above, one needs a functorial version of Theorem~\ref{thmWedgeDecomposition} and therefore a functorial version of the Poset fiber theorem~\ref{thmFiberThm}.

\begin{thm}[Functorial poset fiber theorem]\label{thmFuncFiberThm}
Consider a commutative diagram of posets
\[
\xymatrix{
P' \ar[d]^{f_1} \ar[r]^{g} & P''\ar[d]^{f_2}\\
Q' \ar@{^{(}->}[r]^{h}& Q''
}
\]
where $h$ is injective and both $f_1$ and $f_2$ satisfy the assumptions of the poset fiber theorem~\ref{thmFiberThm}:
\begin{itemize}
  \item for any $q'\in Q'$ the fiber $|f_1^{-1}(Q'_{\leqslant q'})|$ is $\dim |f_1^{-1}(Q'_{<q'})|$-connected; 
  \item for any $q''\in Q''$ the fiber $|f_2^{-1}(Q''_{\leqslant q''})|$ is $\dim |f_2^{-1}(Q''_{<q''})|$-connected;
  \item both $Q'$ and $Q''$ are connected.
\end{itemize}
Then the map $g\colon P'\to P''$ is homotopy equivalent to the map
\[
|Q'|\vee\bigvee_{q'\in Q'} |f_1^{-1}(Q'_{\leqslant q'})|\ast |Q'_{>q'}|\to |Q''|\vee\bigvee_{q''\in Q''} |f_2^{-1}(Q''_{\leqslant q''})|\ast |Q''_{>q''}|
\]
induced on the wedge summands by the map $h\colon |Q'|\to|Q''|$ and its restrictions: $|Q'_{\leqslant q'}|\to |Q''_{\leqslant h(q')}|$ and $|Q'_{>q'}|\to |Q''_{>h(q')}|$.
\end{thm}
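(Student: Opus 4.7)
The plan is to retrace the proof of the original poset fiber theorem (Theorem~\ref{thmFiberThm}) and verify that each construction used there is natural with respect to the commutative square. The proof of~\cite{BjWW} runs in two stages. First, for a single morphism $f\colon P\to Q$ satisfying the connectivity hypothesis, the Projection Lemma identifies $|P|$ with the homotopy colimit of the diagram $D_f\colon Q\to\Top$, $D_f(q)=|f^{-1}(Q_{\leqslant q})|$, with the structure maps being inclusions. Second, the Wedge Lemma converts this homotopy colimit into the wedge on the right-hand side of~\eqref{eqWedgeGeneral}, by using the connectivity hypothesis to nullhomotope the structure maps of $D_f$ inside the mapping telescope. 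Both constructions are homotopy-functorial: they are built from simplicial models (nerves, mapping cylinders) that respect maps of diagrams. So the strategy is to apply these two stages simultaneously to $f_1$ and $f_2$ and to check that the transition maps going between them are controlled by $h$ and $g$.

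Concretely, I would argue in three steps. First, the commutativity of the given square, together with injectivity of $h$, provides a natural transformation of diagrams $\eta\colon D_{f_1}\Rightarrow h^*D_{f_2}$ over $Q'$: for each $q'\in Q'$, the map $g$ sends $f_1^{-1}(Q'_{\leqslant q'})$ into $f_2^{-1}(Q''_{\leqslant h(q')})$ (this uses injectivity to ensure $h(Q'_{\leqslant q'})=Q''_{\leqslant h(q')}\cap h(Q')$). Composing with the map of homotopy colimits induced by $h\colon Q'\hookrightarrow Q''$ realizes $g$, up to homotopy, as the composite
\[
|P'|\simeq\hocolim_{Q'} D_{f_1}\xrightarrow{\hocolim\eta}\hocolim_{Q'}h^*D_{f_2}\xrightarrow{h_*}\hocolim_{Q''}D_{f_2}\simeq|P''|.
\]
Second, applying the Wedge Lemma to both $D_{f_1}$ and $D_{f_2}$ produces the two wedges appearing in the statement. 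The Wedge Lemma decomposition is natural in the diagram: given $\eta\colon D_1\Rightarrow D_2$ over the same poset, the induced map on hocolims is homotopic, under the wedge decompositions, to the evident map that is the identity on $|Q|$ and $\eta(q)*\id_{|Q_{>q}|}$ on the $q$-th summand. Third, the inclusion $h_*$ sends the $q'$-summand $|f_1^{-1}(Q'_{\leqslant q'})|*|Q'_{>q'}|$ (after applying $\eta$) to the $h(q')$-summand of the decomposition for $P''$, with the join factor $|Q'_{>q'}|$ mapped by the obvious restriction $|Q'_{>q'}|\to|Q''_{>h(q')}|$, while the $|Q'|$-summand maps via $h\colon|Q'|\to|Q''|$. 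Summands of $P''$ indexed by $q''\notin h(Q')$ receive no image, which is consistent with the wedge-summand inclusion described in the theorem.

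The main obstacle I expect is Step~2: the functoriality of the Wedge Lemma is folklore but not explicit in~\cite{BjWW}. To secure it, I would reprove the Wedge Lemma in the following way. Present $\hocolim D$ as a union of mapping cylinders over the chains of $Q$, and collapse each $D(q)$ to a basepoint along a compatible system of nullhomotopies chosen inductively from the bottom up in $Q$ (possible by the connectivity hypothesis, via obstruction theory on CW pairs). For a natural transformation $\eta$, one chooses nullhomotopies on the target diagram first, then pulls them back to the source using the skeletal filtration and the fact that nullhomotopies lift up to a higher-dimensional correction, which vanishes in the wedge summand. This gives the required commutative diagram of wedges up to homotopy. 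Once this functorial Wedge Lemma is in hand, Step~1 and Step~3 are essentially bookkeeping, and the theorem follows.
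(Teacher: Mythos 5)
Your proposal is correct and follows essentially the same route as the paper: both retrace the homotopy-colimit proof of the poset fiber theorem, factor $g$ through the pulled-back diagram $h^*D_{f_2}$ over $Q'$ (the paper's $D^\diamond=D''\circ h$), and then use the naturality of the passage from $\hocolim D$ to the wedge via the diagram $\tD$ with constant structure maps. The paper is terser about the functoriality of the Wedge Lemma step, which you elaborate with an explicit compatible choice of nullhomotopies, but the underlying argument is the same.
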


The injectivity of $h$ is needed to ensure that restrictions of $h$ to $|Q'_{>q'}|\to Q''_{>h(q')}$ are well defined.

\begin{proof}
We simply note that the proof of the poset fiber theorem from~\cite{BjWW} is based on the properties of homotopy colimits hence it is functorial. The extended version of this argument is written below.

Let $D'\colon\cat(Q')\to\Top$ be the diagram of spaces, defined by $D'(q')=|f_1^{-1}(Q'_{\leqslant q'})|$ with morphisms induced by inclusions of subsets. Assume that a base point is chosen $x_{q'}\in D'(q')$ for each $q'\in Q'$. Let $\tD'\colon\cat(Q')\to\Top$ be the diagram with the same entries $\tD'(q')=D'(q')=|f_1^{-1}(Q'_{\leqslant q'})|$ but all nonidentity morphisms being the constant maps to the base points $x_{q'}$. The poset fiber theorem is proved by the sequence of homotopy equivalences
\[
|P'|= \colim D'\stackrel{\simeq}{\leftarrow} \hocolim D'\stackrel{\simeq}{\to} \hocolim \tD'\stackrel{\simeq}{\to} 
|Q'|\vee\bigvee_{q'\in Q'}(\tD'(q')\ast |Q'_{>q'}|)
\]
Here the first homotopy equivalence is due to $D'$ being cofibrant. The second equivalence follows because $D'\simeq \tD'$ (the asphericity of fibers is applied at this point). The last equivalence is the Wedge lemma~\cite[Proposition 3.5]{WZZ}.

A similar sequence of equivalences holds for the map $f_2\colon P''\to Q''$. Let $D'',\tD''\colon \cat(Q'')\to \Top$ be the diagrams with entries $D''(q'')=|f_2^{-1}(Q''_{\leqslant q''})|$, and inclusion morphisms in case of $D''$, and constant morphisms in case of $\tD''$. Then we have
\[
|P''|= \colim D''\stackrel{\simeq}{\leftarrow} \hocolim D''\stackrel{\simeq}{\to} \hocolim \tD''\stackrel{\simeq}{\to}
|Q''|\vee\bigvee_{q''\in Q''}(\tD''(q'')\ast |Q''_{>q''}|).
\]

Consider the additional diagrams $D^\diamond, \tD^\diamond\colon\cat(Q')\to\Top$ defined as compositions $D^\diamond = D''\circ h$ and $\tD^\diamond = \tD''\circ h$. The functoriality of the poset fiber theorem follows from commutativity of the diagram
\[
\xymatrix{
P' \ar[dd]^{g} \ar@{=}[r] & \colim D' \ar[d]  & \hocolim D' \ar[d] \ar[l]_{\simeq} \ar[r]^{\simeq} & \hocolim\tD' \ar[d] \ar[r]^(.3){\simeq} & |Q'| \vee \bigvee_{q'\in Q'}(|f_1^{-1}(Q'_{\leqslant q'})| \ast |Q'_{>q'}|)\ar[d] \\
& \colim D^\diamond \ar[d]  & \hocolim D^\diamond \ar[l]_{\simeq} \ar@{^{(}->}[d] \ar[r] & \hocolim \tD^\diamond \ar[d]\ar[r]^(.3){\simeq} & |Q'|\vee\bigvee_{q'\in Q'}(|f_2^{-1}(Q''_{\leqslant h(q')})|\ast |Q'_{>q'}|\ar[d]\\
P'' \ar@{=}[r] & \colim D''  & \hocolim D'' \ar[l]_{\simeq} \ar[r]^{\simeq} & \hocolim\tD'' \ar[r]^(.3){\simeq} & |Q''|\vee\bigvee_{q''\in Q''}(|f_2^{-1}(Q''_{\leqslant q''})|\ast |Q''_{>q''}|)
}
\]
Each square of this diagram is commutative either by definition or by the properties of colimits and homotopy colimits.
\end{proof}

\begin{thm}[Functorial wedge decomposition]\label{thmFuncWedgeOfSpheres}
Let $h\colon S'\to S''$ be an inclusion of simplicial posets, and let $\ma'\colon E(S')\to\Zo_+$ and $\ma''\colon E(S'')\to\Zo_+$ be edge-multiplicity functions satisfying inequality $\ma'(I')\leqslant \ma''(h(I'))$ for each edge $I'\in E(S')$, and hence inducing the inclusion 
\[
g\colon S'_{\ma'}\to S''_{\ma''}
\]
of edge-inflated simplicial posets. Assume for simplicity that both $|S'|$ and $|S''|$ are connected. Then $g$ is homotopy equivalent to the map
\[
|S'|\vee \bigvee_{I'\in S', \dim I'\geqslant 1}(\Sigma^{\dim I'+1}|\lk_{S'}I'|)^{\vee n(\ma_{I'},I')}\to |S''|\vee \bigvee_{I''\in S'', \dim I''\geqslant 1}(\Sigma^{\dim I''+1}|\lk_{S''}I''|)^{\vee n(\ma_{I''},I'')}.
\]
induced by $h\colon S'\to S''$ and its restrictions to links.
\end{thm}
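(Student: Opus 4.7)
The plan is to apply the Functorial Poset Fiber Theorem~\ref{thmFuncFiberThm} to the commutative square of posets
\[
\xymatrix{
S'_{\ma'} \ar[d]_{f_1} \ar[r]^{g} & S''_{\ma''}\ar[d]^{f_2}\\
S' \ar@{^{(}->}[r]^{h}& S''
}
\]
where $f_1, f_2$ are the collapse maps $(I,c_I)\mapsto I$. The square commutes because the assumption $\ma'(e)\leqslant \ma''(h(e))$ on every edge $e\in E(S')$ guarantees that sheet-counting functions on $S'_{\ma'}$ transport to valid sheet-counting functions on $S''_{\ma''}$, so that $g$ is well-defined and compatible with collapse. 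The bottom arrow $h$ is injective, and $|S'|, |S''|$ are connected by hypothesis.

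Next I would verify the fiber-connectivity condition of Theorem~\ref{thmFuncFiberThm}. For any simplex $I\in S'$ the fiber $f_1^{-1}(S'_{\leqslant I})$ is canonically isomorphic to the edge-inflated simplex $(\Delta_{V(I)})_{\ma'_I}$, which by Theorem~\ref{thmWedgeOfSpheres} is homotopy equivalent to a wedge of $(\dim I)$-dimensional spheres and hence $(\dim I-1)$-connected; the analogous statement holds for $f_2$. Applying Theorem~\ref{thmFuncFiberThm}, the map $g$ is homotopy equivalent to the induced map
\[
|S'| \vee \bigvee_{I'\in S'} |f_1^{-1}(S'_{\leqslant I'})| \ast |\lk_{S'}I'| \longrightarrow |S''| \vee \bigvee_{I''\in S''} |f_2^{-1}(S''_{\leqslant I''})| \ast |\lk_{S''}I''|,
\]
where the summand indexed by $I'\in S'$ is sent to the summand indexed by $h(I')\in S''$ via the join of the inclusion $(\Delta_{V(I')})_{\ma'_{I'}}\hookrightarrow (\Delta_{V(h(I'))})_{\ma''_{h(I')}}$ on fibers and the restriction $|\lk_{S'}I'|\to|\lk_{S''}h(I')|$ of $|h|$ to links. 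Summands for $0$-dimensional $I'$ collapse since their fibers are single points.

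To recover the form stated in the theorem, I would identify $|f_i^{-1}(S_{\leqslant I})|\ast|\lk_S I|$ with $(\Sigma^{\dim I + 1}|\lk_S I|)^{\vee n(\ma_I, I)}$ using Theorem~\ref{thmWedgeOfSpheres} together with the standard homeomorphism $(S^k)^{\vee r}\ast X\simeq (\Sigma^{k+1} X)^{\vee r}$. Summands indexed by $I''\in S''\setminus h(S')$ then appear only in the target wedge, and summands indexed by $I'\in S'$ are matched with the summands indexed by $h(I')$ in the source.

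The main obstacle will be confirming that, after these join-and-suspension identifications, the map between the $I'$-th and $h(I')$-th summands is actually a subwedge inclusion induced by $h|_{\lk_{S'}I'}$ and the increase of multiplicities. Here Proposition~\ref{propMonot} does the job on the fiber factor: the inclusion $(\Delta_{V(I')})_{\ma'_{I'}}\hookrightarrow (\Delta_{V(h(I'))})_{\ma''_{h(I')}}$ is homotopy equivalent to an inclusion of a subwedge of $(\dim I')$-spheres into a larger wedge of $(\dim I')$-spheres. Joining this with the link-restriction and applying functoriality of the suspension/join then yields the asserted subwedge inclusion, which by construction is precisely the one induced by $h$ and its restrictions to links.
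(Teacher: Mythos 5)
Your proposal is correct and follows essentially the same route as the paper: apply the functorial poset fiber theorem (Theorem~\ref{thmFuncFiberThm}) to the square formed by $g$ and the collapse maps $S'_{\ma'}\to S'$, $S''_{\ma''}\to S''$, identify the fibers as edge-inflated simplices via Theorem~\ref{thmWedgeOfSpheres}, and invoke Proposition~\ref{propMonot} to see that the induced fiber maps are subwedge inclusions. The paper states this in one sentence; your write-up just fills in the same details explicitly.
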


This statement follows directly from Theorem~\ref{thmFuncFiberThm} and Proposition~\ref{propMonot}, which asserts that the maps of spaces $|S'_{\leqslant I'}|\to|S''_{\leqslant I''}|$ induced by $g$ are homotopic to inclusions of subwedges of spheres.

Theorem~\ref{thmFuncWedgeOfSpheres} justifies the idea of persistent homology calculation described in Construction~\ref{conIdeaOfPersistence}. The precise algorithm to compute persistent homology of a multigraph and its realization will be described in a different paper.

\section{General inflations}\label{secGeneralInflations}

The only reason why we restricted our attention to edge-inflations so far, is their relation to clique complexes of multigraphs, hence possible applications in topological data analysis. However, from the mathematical point of view it is natural to consider inflations of simplices of any dimension. In this section we introduce a general inflation process on a simplicial poset.

Given a set $V$, let us call \textit{inflation function} on $V$ a function which maps finite subsets of $V$ to non-negative integers with the only condition that the empty set is mapped to one.

\begin{con} (Inflated complex $M_\mu$.)

Let $\mu$ be an inflation function on $V$.
We will denote its value on subset $U = \{u_1,u_2,\dots, u_k\} \subset V$ as $\mu_{u_1,u_2\dots,u_k} = \mu_U$.
We begin with the set of vertices of $M_\mu$.
For any $v\in V$ consider $\mu_v$ vertices labeled $v$.
Then each pair of these vertices is connected with $\mu_{v_1, v_2}$ edges if their labels are $v_1$ and $v_2$.
Then each triple of these edges forming (the boundary of) a triangle is glued with $\mu_{v_1, v_2, v_3}$ $2$-simplices if the labels of their vertices are $v_1, v_2, v_3$. We proceed further in a similar way. At $n$-th step each $n+1$ of the $(n-1)$-simplices forming the boundary of $n$-simplex is glued with $\mu_{v_1, \dots, v_{n+1}}$ $n$-simplices if the labels of their vertices are $v_1, \dots, v_{n+1}$.

The resulting space can be considered as a simplicial poset and will be denoted by $M_\mu$.
\end{con}

\begin{rem}
If $\mu_U=0$ for some $U = \{u_1,u_2,\dots,u_k\} \subset V$ then the simplicial poset $M_\mu$ does not contain any simplex on vertices with labels $u_1,\dots,u_k$.
Therefore for $U'\supset U$ there will be no corresponding simplices and without loss of generality we may consider only the inflation functions with zero value on such $U'$.
\end{rem}

The following statement seems to be obvious.

\begin{lem}\label{lemInflSimplexStructure}
The simplices of $M_\mu$ are pairs $(I, c)$ where $I$ is a finite subset of $V$ and $c: I'\mapsto c_{I'}$ is a function from nonempty subsets of $I$ to positive integers such that for all $I'$ there holds $c_{I'}\le \mu_{I'}$.
The simplex $(I, c)$ is a subsimplex of $(I', c')$ if and only if $I$ is a subset of $I'$ and $c$ in a restriction of $c'$.
\end{lem}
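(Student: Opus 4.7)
The plan is to prove the lemma by induction on the dimension $n = |I| - 1$.

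Base case ($n = 0$): By the first step of the construction, the vertices of $M_\mu$ are the labeled pairs $(v, k)$ with $v \in V$ and $k \in \{1, \ldots, \mu_v\}$, which matches the description with $I = \{v\}$ and $c(\{v\}) = k$.

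Inductive step: Assuming the description is known in all dimensions below $n$, I would unpack the $n$-th gluing step of the construction. An $n$-simplex is produced by selecting a vertex label set $I = \{v_1, \dots, v_{n+1}\}$, choosing a configuration of $n+1$ already-built $(n-1)$-simplices forming the boundary of an $n$-simplex with these labels, and finally choosing one of $\mu_I$ glued top cells. By the inductive hypothesis, each boundary face $F_j = I \setminus \{v_j\}$ is encoded by a sheet-counting function $c^{(j)}$ on the nonempty subsets of $F_j$. For the $n+1$ faces to actually bound a single $n$-simplex, any two of them, $F_j$ and $F_k$, must share a common $(n-2)$-face; by the inductive characterization this forces $c^{(j)}$ and $c^{(k)}$ to restrict to the same sheet-counting function on nonempty subsets of $F_j \cap F_k$. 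A standard sheaf-type assembly then gives a single function $\tilde c$ on nonempty proper subsets of $I$ whose restriction to each $F_j$ is $c^{(j)}$. Appending the choice $c(I) \in [\mu_I]$ of the top cell extends $\tilde c$ to a function $c$ on all nonempty subsets of $I$ with $c(I') \leq \mu_{I'}$ throughout, exactly as stated.

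The partial-order claim then falls out directly from the construction: by the gluing, the codimension-one faces of an $n$-simplex $(I,c)$ are precisely the $(n-1)$-simplices $(F_j, c|_{F_j})$ onto whose union it was glued. Iterating this one step at a time (or running a secondary downward induction), every subsimplex of $(I,c)$ has the form $(J, c|_{\text{nonempty subsets of }J})$ for some $\varnothing \neq J \subseteq I$, and conversely every such restriction is indeed a subsimplex. Hence $(I,c) \leq (I',c')$ in $M_\mu$ iff $I \subseteq I'$ and $c$ is the restriction of $c'$.

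The only real obstacle is the compatibility step in the inductive argument: verifying that the separate sheet-counting functions carried by the $n+1$ codimension-one faces must agree on their pairwise intersections, so that they genuinely assemble into a function defined on the full subset lattice of $I$. This is forced by the fact that an $n$-simplex has a unique $(n-2)$-face obtained by deleting any pair of vertices, so the two codimension-one faces containing that $(n-2)$-face must record the same sheet data on it; everything else is bookkeeping on the combinatorics of the gluing.
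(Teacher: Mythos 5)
Your argument is correct. Note that the paper offers no proof of this lemma at all --- it is introduced with ``the following statement seems to be obvious'' --- so your induction on dimension, tracking the staged gluing in the construction of $M_\mu$, is a legitimate formalization of exactly the reasoning the authors leave implicit. The one point that genuinely needs checking is the one you isolate: that the $n+1$ sheet-counting functions $c^{(j)}$ carried by the facets of a boundary configuration assemble into a single function on the nonempty proper subsets of $I$. This works because every such subset $I'$ omits some vertex $v_j$ and hence lies in $F_j$, and any two facets containing $I'$ meet in the codimension-two face $F_j\cap F_k \supseteq I'$, on which the inductive description of the partial order forces their sheet data to agree; conversely every compatible family does arise as a boundary configuration since all the required lower-dimensional simplices exist by induction. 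So the proposal is complete; I would only suggest stating the converse direction of the assembly step (every compatible $\tilde c$ is realized by an actual boundary sphere in the already-built skeleton) explicitly rather than folding it into ``bookkeeping.''
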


For an inflation function $\mu$ and subset $I\subset V$ let us introduce the \textit{induced inflation function} $\mu_{/I}$ to be an inflation function for the set $V\setminus I$ mapping a non-empty $J\subset V\setminus I$ to $(\mu_{/I})_J = \prod_{I'\le I}\mu_{I'\sqcup J}$, the value on the empty set is set to be unit.

\begin{lem}\label{lemInflatedLink}
The link of the simplex $(I, c)$ in $M_\mu$ is isomorphic to an inflated complex $M_{\mu_{/I}}$ on the vertex set $V\setminus I$ and inflation function $\mu_{/I}$.
\end{lem}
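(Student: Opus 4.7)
The plan is to apply Lemma~\ref{lemInflSimplexStructure} on both sides of the desired isomorphism and then exhibit an explicit order-preserving bijection that factors every subset of $J$ through the fixed subset $I$. By that lemma, a simplex of $\lk_{M_\mu}(I,c)=(M_\mu)_{>(I,c)}$ is a pair $(J,c')$ with $I\subsetneq J\subseteq V$ such that $c'$ restricts to $c$ on nonempty subsets of $I$. Every nonempty subset of $J$ either lies inside $I$, in which case its $c'$-value is prescribed by $c$, or uniquely decomposes as $I'\sqcup K$ with $I'\subseteq I$ and $\varnothing\neq K\subseteq \bar J:=J\setminus I$. Hence the free data in $c'$ is exactly, for each nonempty $K\subseteq \bar J$, a tuple
\[
\bigl(c'(I'\sqcup K)\bigr)_{I'\subseteq I}\in \prod_{I'\subseteq I}\{1,\dots,\mu_{I'\sqcup K}\}.
\]

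On the other side, a simplex of $M_{\mu_{/I}}$ is a pair $(\bar J,\bar c)$ with $\bar J\subseteq V\setminus I$ and $\bar c(K)\in\{1,\dots,(\mu_{/I})_K\}=\bigl\{1,\dots,\prod_{I'\subseteq I}\mu_{I'\sqcup K}\bigr\}$ for every nonempty $K\subseteq \bar J$. I would fix once and for all a bijection
\[
\phi_K\colon \prod_{I'\subseteq I}\{1,\dots,\mu_{I'\sqcup K}\}\;\xrightarrow{\ \cong\ }\;\{1,\dots,(\mu_{/I})_K\}
\]
for every nonempty $K\subseteq V\setminus I$ (any mixed-radix enumeration will do), and use these to define
\[
\Phi\colon \lk_{M_\mu}(I,c)\longrightarrow M_{\mu_{/I}},\quad (J,c')\mapsto(\bar J,\bar c),\quad \bar c(K):=\phi_K\bigl((c'(I'\sqcup K))_{I'\subseteq I}\bigr).
\]

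It then remains to verify that $\Phi$ is an isomorphism of simplicial posets. Bijectivity is immediate since each $\phi_K$ is a bijection on the sets just described. For order preservation, the relation $(J_1,c_1')\le(J_2,c_2')$ in the link is equivalent to $J_1\subseteq J_2$ together with $c_1'=c_2'|_{J_1}$; after the factorization $I''=I'\sqcup K$ this is equivalent to $\bar J_1\subseteq \bar J_2$ and the agreement of the tuples $(c_i'(I'\sqcup K))_{I'\subseteq I}$ for every nonempty $K\subseteq \bar J_1$, which after applying $\phi_K$ pointwise translates into $\bar c_1=\bar c_2|_{\bar J_1}$, i.e.\ the corresponding order relation in $M_{\mu_{/I}}$. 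The main (and only) obstacle is the combinatorial bookkeeping of this factorization of subsets of $J$ through $I$; no topology enters, and the construction is canonical once the bijections $\phi_K$ are chosen.
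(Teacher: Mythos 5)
Your proposal is correct and is essentially the paper's argument: the paper's proof consists of the single line ``Follows directly from Lemma~\ref{lemInflSimplexStructure},'' and you have simply written out the details of that deduction — the factorization of subsets of $J$ as $I'\sqcup K$, the count $\prod_{I'\subseteq I}\mu_{I'\sqcup K}=(\mu_{/I})_K$, and the compatibility of the chosen bijections $\phi_K$ with restriction. Nothing further is needed.
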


\begin{proof}
Follows directly from lemma \ref{lemInflSimplexStructure}.
\end{proof}

\begin{lem}\label{lemAddInflation}
Let $V$ be a vertex set and $\mu, \nu$ be two inflation functions such that there exist exactly one $I\subset V$ such that $\mu_I\neq \nu_i$ and for this $I$ there holds $\nu_I=\mu_I+1>1$.
Suppose also that $M_\mu$ is connected.
Then $$M_\nu \simeq M_\mu\vee (\Sigma^{\dim I + 1} \operatorname{lk}_{M_\mu} I)^{\vee \prod_{I'< I} \mu_{I'}}.$$
The inclusion $M_\mu\subset M_\nu$ is consistent with the inclusion of $M_\mu$ onto the first wedge summand.
\end{lem}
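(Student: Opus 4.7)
The plan is to decompose $M_\nu$ as $M_\mu$ together with the closed stars of the new minimal simplices that appear in $M_\nu\setminus M_\mu$, and then use contractibility of each such star combined with a null-homotopy argument to extract the wedge structure.

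By Lemma~\ref{lemInflSimplexStructure}, the simplices in $M_\nu\setminus M_\mu$ are precisely those $(J,c)$ with $I\subseteq J$ and $c_I=\nu_I$. Their minimal representatives are $\sigma_{c^*}:=(I,c^*)$ where $c^*$ is a sheet-counting function on nonempty subsets of $I$ with $c^*_I=\nu_I$ and $c^*_{I'}\in[\mu_{I'}]$ for $\emptyset\ne I'\subsetneq I$; the number of such $c^*$ equals $\prod_{I'<I}\mu_{I'}$, since the empty factor $\mu_\emptyset=1$. Let $A_{c^*}$ be the closed star of $\sigma_{c^*}$ in $M_\nu$. Lemma~\ref{lemInflatedLink} gives $\lk_{M_\nu}\sigma_{c^*}\cong M_{\nu_{/I}}$; but $\mu$ and $\nu$ differ only on $I$, so $\nu_{/I}=\mu_{/I}$, and this link coincides with $\lk_{M_\mu}I=:L$. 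Hence $A_{c^*}\cong\sigma_{c^*}\ast L$ is contractible. Moreover, any simplex in $A_{c^*_i}\cap A_{c^*_j}$ that is not in $M_\mu$ would force $c|_I=c^*_i=c^*_j$, so $A_{c^*_i}\cap A_{c^*_j}\subseteq M_\mu$ whenever $c^*_i\ne c^*_j$, and $M_\nu=M_\mu\cup\bigcup_{c^*}A_{c^*}$.

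The crucial step is to identify $A_{c^*}\cap M_\mu$ and to show that its inclusion into $M_\mu$ is null-homotopic. Pick any $\tilde c_I\in[\mu_I]$ and set $\tilde\sigma:=(I,\tilde c)\in M_\mu$ with $\tilde c|_{\partial I}=c^*|_{\partial I}$. Unpacking the definitions, a simplex $(J',c')\in M_\mu$ lies in $A_{c^*}\cap M_\mu$ iff $I\not\subseteq J'$ and $c'|_{I\cap J'}=c^*|_{I\cap J'}$, which is exactly the boundary of the closed star of $\tilde\sigma$ in $M_\mu$, namely $\partial\tilde\sigma\ast L$. Since $|\partial\tilde\sigma|\cong S^{\dim I-1}$, this yields $|A_{c^*}\cap M_\mu|\simeq\Sigma^{\dim I}|L|$, and the inclusion factors through the contractible star $\tilde\sigma\ast L\subseteq M_\mu$, hence is null-homotopic.

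Finally, I attach the $A_{c^*}$ to $M_\mu$ successively. Each $A_{c^*}$ is contractible and $A_{c^*}\cap M_\mu\hookrightarrow M_\mu$ is a null-homotopic cofibration, so the standard cofibre sequence gives
\[
M_\mu\cup_{A_{c^*}\cap M_\mu}A_{c^*}\;\simeq\;M_\mu\vee\Sigma(A_{c^*}\cap M_\mu)\;\simeq\;M_\mu\vee\Sigma^{\dim I+1}L,
\]
with the natural map $M_\mu\hookrightarrow M_\mu\cup A_{c^*}$ identified with inclusion of the first wedge summand. Because $A_{c^*_i}\cap A_{c^*_j}\subseteq M_\mu$, later attachments do not disturb previously produced wedge summands, and iteration yields the decomposition with $\prod_{I'<I}\mu_{I'}$ copies of $\Sigma^{\dim I+1}L$, consistent with $M_\mu\subseteq M_\nu$ mapping onto the first factor. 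The principal obstacle is the combinatorial identification $A_{c^*}\cap M_\mu=\partial\tilde\sigma\ast L$, which requires careful bookkeeping with the sheet-counting functions on intersections of subsets of $I$ with subsets of $J'$.
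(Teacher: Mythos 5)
Your proof is correct and follows essentially the same route as the paper's: both delete the simplices carrying the top sheet index $\nu_I$, observe that each deleted star attaches along $\partial\Delta^{\dim I}\ast\lk_{M_\mu}I$ via a map that factors through the contractible closed star of an already-present copy $(I,\tilde c)$ and is therefore null-homotopic, and conclude by the cofibre/wedge computation $\Sigma(S^{\dim I-1}\ast\lk_{M_\mu}I)\simeq\Sigma^{\dim I+1}\lk_{M_\mu}I$. Your write-up is somewhat more careful than the paper's about the combinatorial identification of $A_{c^*}\cap M_\mu$ and about why the successive attachments do not interfere (the pairwise intersections of the new stars lie in $M_\mu$), but the underlying argument is identical.
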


\begin{proof}
If we delete from $M_\nu$ all simplices $(I, c)$ for all admissible $c$ such that $c_I = \nu_I$ together with all simplices containing them, we obtain $M_\mu$.
The number of such simplices is exactly $\prod_{I'< I} \mu_{I'}$.
Let us prove that adding any such a simplex $(I, c)$ together with its star is equivalent to wedge addition of $\Sigma^{\dim I + 1} \operatorname{lk}_{M_\mu} I$.

Indeed, $\operatorname{st}_{M_\nu} (I, c)$ is isomorphic to $\Delta^{\operatorname{dim}I} * \operatorname{lk}_{M_\nu} (I, c)/\,\,\sim$ where the equivalence relation comes from the gluing map $f:(\partial\Delta^{\operatorname{dim}I}) * \operatorname{lk}_{M_\nu} (I, c)\to M_\mu$.
But this gluing map is absolutely the same for the simplex $(I, c')$ where $c'$ coincides with $c$ except at $c'_I = 1$.
Therefore $f$ is null-homotopic and $M_\mu\cup_f (\Delta^{\operatorname{dim}I} * \operatorname{lk}_{M_\nu} (I, c)) \simeq M_\mu \vee (\Delta^{\operatorname{dim}I} * \operatorname{lk}_{M_\nu} (I, c))/(\partial(\Delta^{\operatorname{dim}I} * \operatorname{lk}_{M_\nu} (I, c)) \simeq M_\mu \vee S^{\operatorname{dim} I} * \operatorname{lk}_{M_\nu} (I, c)$, which concludes the proof.
\end{proof}

The following lemma is a generalization of Theorem \ref{thmWedgeDecomposition}.

\begin{lem}\label{lemInflatedWedgeDecomp}
Let $V$ be a vertex set and $\mu$ be an inflation function.
Denote by $\mu^\circ$ the inflation function on $V$ mapping $I$ to $\sigma(\mu_I)$ where $\sigma(x)$ is zero when $x=0$ and one when $x>0$.
Therefore $\mu^{\circ}$ is a characteristic function of simplices of the ordinary simplicial complex $M_{\mu^\circ}$.
Then 
\begin{enumerate}
\item 
    \begin{align}\label{ma}
        M_\mu \simeq \bigsqcup_{W\in \operatorname{comp}(\mu)}\big( M_{\mu^\circ|_W} \vee \bigvee_{I\subseteq W, I\neq \emptyset}(\Sigma^{\dim I + 1} \operatorname{lk}_{M_{\mu^\circ}} I)^{\vee N(\mu, I)} \big)
    \end{align}
    where $N(\mu, I) = \sum_{I'\le I} (-1)^{\dim I - \dim I'} \prod_{I''\le I'}\mu_{I''}$,
    the disjoint union is over the vertex sets of connected components of $M_{\mu^\circ}$ and $\mu^\circ|_W$ means the restriction of $\mu^{\circ}$ to the vertex set $W$.
    
\item if $M_\mu$ is connected (which is a simple condition on $\mu^{\circ}$) we have 
    \begin{align}\label{ma_connected}
        M_\mu \simeq \bigvee_{I\subseteq V}(\Sigma^{\dim I + 1} \operatorname{lk}_{M_{\mu^\circ}} I)^{\vee N(\mu, I)}
    \end{align}

\item for disconnected $M_\mu$ holds the following refinement of the previous formula:
    \begin{align}\label{ma_suspension}
        \Sigma M_\mu \simeq \bigvee_{I\subseteq V}(\Sigma^{\dim I + 2} \operatorname{lk}_{M_{\mu^\circ}} I)^{\vee N(\mu, I)}
    \end{align}
\end{enumerate}
\end{lem}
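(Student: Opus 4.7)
The plan is to establish part (2) by induction on the pair $(|V|, |\mu|)$ ordered lexicographically (where $|\mu| := \sum_I \mu_I$), and then deduce parts (1) and (3) as direct corollaries. The main tool is Lemma~\ref{lemAddInflation}, which describes how $M_\mu$ changes when a single $\mu_I$ is incremented by one. The base case is $\mu = \mu^\circ$: here $M_\mu = M_{\mu^\circ}$, and a standard binomial identity (using that $\prod_{I''\le I'}\mu^\circ_{I''}=1$ for every chain $I'\le I$ lying inside the simplicial complex) gives $N(\mu^\circ,\emptyset)=1$ and $N(\mu^\circ,I)=0$ for every non-empty simplex $I$, collapsing \eqref{ma_connected} to $M_{\mu^\circ}$ itself.

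For the inductive step, pick some $I_0$ with $\mu_{I_0}\ge 2$, let $\mu'$ agree with $\mu$ except $\mu'_{I_0}=\mu_{I_0}-1$, and apply Lemma~\ref{lemAddInflation}:
\[
M_\mu \simeq M_{\mu'} \vee \bigl(\Sigma^{\dim I_0+1}\lk_{M_{\mu'}} I_0\bigr)^{\vee\prod_{I'<I_0}\mu'_{I'}}.
\]
By Lemma~\ref{lemInflatedLink}, $\lk_{M_{\mu'}} I_0\cong M_{\mu'_{/I_0}}$, an inflated complex on the strictly smaller vertex set $V\setminus I_0$. Its underlying simplicial complex $\lk_{M_{\mu^\circ}} I_0$ may be disconnected, so I invoke the inductive version of part (3) to decompose $\Sigma^{\dim I_0+1} M_{\mu'_{/I_0}}$ as a genuine wedge. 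Using the transitivity identification $\lk_{M_{(\mu^\circ)_{/I_0}}} J\cong \lk_{M_{\mu^\circ}}(I_0\sqcup J)$ and reindexing $K = I_0\sqcup J$, every such summand takes the standard form $\Sigma^{\dim K+1}\lk_{M_{\mu^\circ}} K$.

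Matching the resulting decomposition of $M_\mu$ against the target formula reduces to the combinatorial identity
\[
N(\mu,K)-N(\mu',K) = N(\mu'_{/I_0}, K\setminus I_0)\cdot\prod_{I'<I_0}\mu'_{I'} \qquad \text{when } I_0\subseteq K,
\]
and $N(\mu,K)=N(\mu',K)$ otherwise. I would verify this by direct expansion: since $\mu$ and $\mu'$ differ only at $I_0$, the only summands of $N(\mu,K)$ that change are those with $I_0\le I'$; writing $I'=I_0\sqcup J_1$ with $J_1\le K\setminus I_0$, factoring out $\prod_{I'<I_0}\mu'_{I'}$, and recognizing the remainder as the defining sum of $N(\mu'_{/I_0},K\setminus I_0)$ gives the identity.

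Parts (1) and (3) are then immediate corollaries. For (1), $M_\mu$ decomposes as a disjoint union indexed by the connected components of $M_{\mu^\circ}$ (inflation never joins components), and part (2) applies inside each component $W$ to $M_{\mu|_W}$. For (3), suspend the decomposition of (1) and use $\Sigma\bigl(\bigsqcup_i X_i\bigr)\simeq \bigvee_i\Sigma X_i\vee (S^1)^{\vee(n-1)}$ for $n$ nonempty components; the extra $S^1$'s combine with the various $\Sigma M_{\mu^\circ|_W}$'s to reconstruct $\Sigma M_{\mu^\circ}$, which is exactly the $I=\emptyset$ summand of the global formula. The main obstacle lies in the inductive step: coherently handling the possibly disconnected intermediate link $M_{\mu'_{/I_0}}$ after applying $\Sigma^{\dim I_0+1}$, and checking that the combinatorial identity for $N$ matches the topological outcome — the rest of the argument is essentially routine.
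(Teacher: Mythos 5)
Your proposal is correct and follows essentially the same route as the paper's proof: induction on the vertex set size and the total amount of inflation, using Lemma~\ref{lemAddInflation} for a single increment, Lemma~\ref{lemInflatedLink} to identify the link as $M_{\mu'_{/I_0}}$ on a smaller vertex set, the suspension form (3) of the induction hypothesis to handle disconnected links, and the same combinatorial identity for $N$. The only cosmetic difference is that you run the induction through part (2) and deduce (1) and (3), whereas the paper runs it through part (1); the mechanics are identical.
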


\begin{rem}
In this formula we consider links in $M_{\mu^{\circ}}$ of some subsets $I$ which possibly do not form a simplex, but actually $N(\mu, I)$ for such subsets is zero, so these undefined links are ignored.
Equivalently the wedge could be considered over only the simplices of the simplicial complex $M_{\mu^{\circ}}$.
\end{rem}

\begin{proof}

Formulas (\ref{ma}) and (\ref{ma_connected}) are equivalent for the case of connected $M_\mu$ (because $M_{\mu^{\circ}} = \operatorname{lk}_{M_{\mu^{\circ}}} I$).
The formula (\ref{ma_suspension}) is also a consequence of the formula (\ref{ma}).
Then it is sufficient to prove the first part.

The proof goes by induction on the number of vertices and on the number of `extra inflations', i.e. $\sum_I(\max(\mu_I-1, 0))$.
For the induction on the number of vertices the base is obvious.
For the induction on the number of extra inflations the base is when there are no extra inflations, then $\mu = \mu^{\circ}$ and $N(\mu, I)=0$ for all nonempty $I$ which make the formula obvious.

It is sufficient to prove the inductive step for adding one extra inflation as in lemma~\ref{lemAddInflation}.
Restricting our attention to the connected component of this extra inflation, we may pass to the case when $M_\mu$ is connected.
Thus by lemma~\ref{lemAddInflation}, $M_\nu \simeq M_\mu\vee (\Sigma^{\dim I + 1} \operatorname{lk}_{M_\mu} I)^{\vee \prod_{I'< I} \mu_{I'}}.$
From lemma~\ref{lemInflatedLink} we have $\operatorname{lk}_{M_\mu} I) \simeq M_{\mu_{/I}}$.
And by the induction hypothesis (in form of formula (\ref{ma_suspension}))
\begin{align*}
    \Sigma M_{\mu_{/I}}
    \simeq 
    \bigvee_{J\subseteq V\setminus I}(\Sigma^{\dim J +2} \operatorname{lk}_{M_{\mu_{/I} {} ^\circ}} J)^{\vee N(\mu_{/I}, J)}
\end{align*}
Clearly, 
$\operatorname{lk}_{M_{\mu_{/I} {} ^\circ}} I$ is $\operatorname{lk}_{M_{\mu^\circ}} (I\sqcup J)$.

Then by lemma~\ref{lemAddInflation} we have
\begin{align*}
    M_\nu = M_\mu\vee \bigvee_{J\subseteq V\setminus I} (\Sigma^{\dim I + \dim J + 2} \operatorname{lk}_{M_{\mu^\circ}} (I\sqcup J))^{\vee (N(\mu, J) \prod_{I'< I} \mu_{I'})}
\end{align*}

By the induction hypothesis we have $M_\mu \simeq \bigvee_{I\subset V}(\Sigma^{\dim I + 1} \operatorname{lk}_{M_{\mu^\circ}} I)^{\vee N(\mu, I)}$.
Then our lemma follows from the fact that for any $J\subset V\setminus I$ there holds 
\[
N(\nu, I\sqcup J) - N(\mu, I\sqcup J) = N(\mu_{/I}, J) \prod_{I'<I} \mu(I'),
\] 
which can be proved directly.
\end{proof}

Actually this proof can be used in a more general setting and give a functorial version of lemma~\ref{lemInflatedWedgeDecomp} which is a generalisation of Theorem~\ref{thmFuncWedgeOfSpheres} for (not necessarily connected) inflated complexes.

\begin{rem}
Absolutely similarly to Proposition~\ref{propCMinherited} it can be proved that the properties of Cohen--Macaulayness and homotopically Cohen--Macaulayness are inherited by general inflations of simplicial complexes. 
\end{rem}

Let $\mathcal{IF}$ denote \textit{the category of inflation functions} where objects are pairs $(V, \mu)$ of a set and an inflation function on it, and morphisms from $(V, \mu)$ to $(W, \nu)$ are injective maps $f:V\to W$ such that for any $v_1, \dots, v_n\in V$ there holds $\mu_{v_1, \dots, v_n}\le \nu_{f(v_1),\dots,f(v_n)}$.
Morphisms in the category $\mathcal{IF}$ will be called \textit{non-decreasing maps of inflation functions}.

Let us introduce also the category $\mathcal{WH}$ of disjoint unions of wedges of topological spaces up to homotopy equivalence.
Namely, an object of $\mathcal{WH}$ is the homotopy class of a topological space $X$ together with its homotopy decomposition $X\simeq \bigsqcup_{\alpha\in a} \bigvee_{\beta\in b(\alpha)} X_\beta$.
The morphisms in $\mathcal{WH}$ are homotopy classes of continuous maps $f: \bigsqcup_{\alpha\in a} \bigvee_{\beta\in b(\alpha)} X_\beta \to \bigsqcup_{\alpha'\in a'} \bigvee_{\beta'\in b'(\alpha')} X'_{\beta'}$ such that for each $\alpha\in a, \beta\in b(\alpha)$ there exist $\alpha'\in a', \beta'\in b'(\alpha')$ and $f_\beta: X_\beta\to X'_{\beta'}$ for which the map $f|_{X_\beta}$ is a composition of $f_\beta$ and the inclusion $X'_{\beta'}\subset \bigsqcup_{\alpha'\in a'} \bigvee_{\beta'\in b'(\alpha')} X'_{\beta'}$.

There exists an obvious forgetful functor $\mathcal{WH}\to \mathcal{H}$ to the homotopy category.

\begin{thm}\label{thmFunctorialInflation}
The formula (\ref{ma}) yields a map 
$$\mu\mapsto M_\mu\simeq \bigsqcup_{W\in \operatorname{comp}(\mu)}\big( M_{\mu^\circ|_W} \vee \bigvee_{I\subseteq W, I\neq \emptyset}(\Sigma^{\dim I + 1} \operatorname{lk}_{M_{\mu^\circ}} I)^{\vee N(\mu, I)} \big)$$
in which the right hand side can be considered as a decomposition in the disjoint union of wedges, therefore, as an object of $\mathcal{WH}$.
This construction yields actually a functor $q: \mathcal{IF}\to \mathcal{WH}$.
The composition of this functor with the forgetful functor $\mathcal{WH}\to \mathcal{H}$ is equivalent to the geometric realization functor.
\end{thm}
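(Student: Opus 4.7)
The plan is to upgrade Lemma~\ref{lemInflatedWedgeDecomp} from an object-level statement to a functor by running its inductive scheme naturally in morphisms. First I would define $q$ on morphisms at the level of the homotopy category. A non-decreasing injection $f\colon(V,\mu)\to(W,\nu)$ induces a map of simplicial posets $M_f\colon M_\mu\to M_\nu$ as follows: fix, for each nonempty $I'\subseteq V$, an injection $\{1,\dots,\mu_{I'}\}\hookrightarrow\{1,\dots,\nu_{f(I')}\}$ of label sets compatible with restriction to subsets of $I'$, and send a simplex $(I,c)\in M_\mu$ (in the notation of Lemma~\ref{lemInflSimplexStructure}) to $(f(I),c')\in M_\nu$ accordingly. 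Different admissible choices yield simplicial maps that differ by an automorphism of $M_\nu$ permuting copies of the same simplex, which is cellularly homotopic to the identity; hence $|M_f|$ is well defined up to homotopy and $f\mapsto|M_f|$ is a functor $\mathcal{IF}\to\mathcal{H}$.

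To refine this to a functor into $\mathcal{WH}$, I would factor any $f$ as a composition of \emph{elementary} morphisms of two types: (i) an inclusion $(V,\mu)\hookrightarrow(W,\mu')$ with $\mu'|_V=\mu$ and $\mu'=0$ on every subset meeting $W\setminus V$; (ii) a unit increment $(V,\mu)\to(V,\nu)$ differing at exactly one nonempty $I\subseteq V$, with $\nu_I=\mu_I+1$. Every $\mathcal{IF}$-morphism admits such a factorization by first embedding the vertex set via (i) and then raising inflation values one unit at a time via type (ii). For (i) we have $|M_{\mu'}|=|M_\mu|$ since $\mu'$ contributes no new simplices, and decomposition~\eqref{ma} for $\mu'$ literally agrees with the decomposition for $\mu$, so this elementary map is the identity in $\mathcal{WH}$. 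For (ii), Lemma~\ref{lemAddInflation} already identifies the inclusion $M_\mu\hookrightarrow M_\nu$ up to homotopy with the inclusion of $M_\mu$ as the first wedge summand in $M_\mu\vee(\Sigma^{\dim I+1}\lk_{M_\mu}I)^{\vee\prod_{I'<I}\mu_{I'}}$.

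The key step is to reconcile this description with the decompositions of $M_\mu$ and $M_\nu$ coming from Lemma~\ref{lemInflatedWedgeDecomp} separately. Using $\lk_{M_\mu}I\cong M_{\mu_{/I}}$ from Lemma~\ref{lemInflatedLink} and expanding $M_{\mu_{/I}}$ by Lemma~\ref{lemInflatedWedgeDecomp} reveals additional wedge summands of the form $\Sigma^{\dim I+\dim J+2}\lk_{M_{\mu^\circ}}(I\sqcup J)$ for $J\subseteq V\setminus I$; the combinatorial identity
\[
N(\nu,I\sqcup J)-N(\mu,I\sqcup J)=N(\mu_{/I},J)\prod_{I'<I}\mu_{I'},
\]
established in the proof of that lemma, shows that these new summands fit exactly into the decomposition of $M_\nu$. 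Thus each $J$-indexed summand of $M_\mu$ includes identically into the corresponding summand of $M_\nu$, and the difference in the number of copies is contributed by the freshly created link pieces---precisely the data of a $\mathcal{WH}$-morphism. Piecing the elementary steps together defines $q(f)$; since $|M_f|$ is canonically determined by $f$, different elementary factorizations produce homotopic refinements, so $q$ is a well-defined functor, and composition is inherited from the composition of $|M_f|$'s. Compatibility with the forgetful functor $\mathcal{WH}\to\mathcal{H}$ is automatic because every refinement is built on top of $|M_f|$.

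The main obstacle is verifying that the induced maps on the individual wedge summands---in particular the maps of links $\lk_{M_{\mu^\circ}}J\to\lk_{M_{\nu^\circ}}f(J)$ appearing between different elementary steps---are homotopic to inclusions of subwedges of spheres, not merely arbitrary cellular maps. This is the natural analogue of Proposition~\ref{propMonot} for general inflations, and it is proved by the same nested induction on vertex count and total extra inflation $\sum_I(\max(\mu_I-1,0))$ that drives Lemma~\ref{lemInflatedWedgeDecomp} itself, combined with the functorial gluing Lemma~\ref{lemStackSpheresMorphism}.
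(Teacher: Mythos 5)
Your overall strategy (reduce to unit-increment morphisms, apply Lemma~\ref{lemAddInflation}, and match summands via the identity $N(\nu,I\sqcup J)-N(\mu,I\sqcup J)=N(\mu_{/I},J)\prod_{I'<I}\mu_{I'}$) is the same as the paper's, but two steps do not hold up as written. First, your justification that $|M_f|$ is well defined is wrong: you allow an arbitrary choice of injections of label sets and claim that two choices differ by an automorphism of $M_\nu$ ``permuting copies of the same simplex, which is cellularly homotopic to the identity.'' Such an automorphism is in general \emph{not} homotopic to the identity --- already for two vertices joined by three edges, transposing two of the edges acts nontrivially on $H_1$ by permuting wedge summands. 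This is precisely the subtlety isolated in the paper's closing remark, where it is shown that the decomposition is \emph{not} functorial over the larger category $\mathcal{IF}_g$ whose automorphism groups are exactly these permutations of copies. The fix is to make no choice at all: a morphism in $\mathcal{IF}$ comes with the canonical inclusion $\{1,\dots,\mu_{I'}\}\subseteq\{1,\dots,\nu_{f(I')}\}$, and the paper defines $M_f$ by carrying the sheet-counting function along this inclusion, so $M$ is an honest functor to simplicial posets with no well-definedness issue.

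Second, you define $q(f)$ by factoring $f$ into elementary steps and then assert that ``different elementary factorizations produce homotopic refinements,'' with no argument. Since the $N(\mu,I)$ copies of $\Sigma^{\dim I+1}\lk_{M_{\mu^\circ}}I$ are a priori unordered, you need a canonical identification of summands that is stable under changing the order of the unit increments; without it, the matching of ``freshly created link pieces'' to summands of $M_\nu$ could depend on the factorization. The paper supplies exactly this missing device: wedge summands are labelled by irreducible $(\mu,I)$-lists $(a_{I'})_{I'\le I}$, the count of which is shown to equal $N(\mu,I)$, and $q(f)$ sends each labelled summand to the identically labelled one. With that labelling, independence of factorization and compatibility with composition become immediate, and the commutativity of the square with the realization functor reduces to observing (as in Lemma~\ref{lemAddInflation}) that gluing the stars of the new simplices is a wedge addition that does not disturb the existing decomposition of $M_\mu$. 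A lesser point: your ``main obstacle'' paragraph is aimed at the wrong target --- morphisms in $\mathcal{WH}$ only require each summand to map into a single summand of the target up to homotopy; the links $\lk_{M_{\mu^\circ}}I$ need not be wedges of spheres, so an analogue of Proposition~\ref{propMonot} or Lemma~\ref{lemStackSpheresMorphism} is neither available nor needed here.
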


\begin{proof}
The interesting part of this theorem is to define 1) the functor $q$ from $\mathcal{IF}$ to $\mathcal{WH}$ 2) the functor $M$ from $\mathcal{IF}$ to the category $SimPoset$ of simplicial posets.

In order to give a precise definition of a functor $q$ we have to show which wedge summands map to which and how.
For simplicity we will do that only for morphisms $i:\mu\to \nu$ of inflation functions on the common vertex set $V$ with identity function from $V$ to itself.
The fact that our definition is compatible to renaming of vertices is quite simple.
Also we restrict our attention to the case of connected $M_\mu, M_\nu$, because the general case easily reduces to it.

For simplices $I$ of $M_{\mu^{\circ}}$ there could be many (precisely, $N(\mu, I)$) wedge summands $\Sigma^{\dim I + 1} \lk_{M_{\mu^\circ}} I$ in $M_A$, so we have to label them.
Consider the set of lists $(a_{I'})_{I'\le I}$ where all $1\le a_{I'}\le \mu_{I'}$.
Such a list will be called \textit{a reducible $(\mu, I)$-list} if there exists $v\in I$ for which $a_{I'}=1$ for all $I'$ containing $v$.
Otherwise the list is called \textit{an irreducible $(\mu, I)$-list}.
A direct calculation shows that the number of irreducible $(\mu, I)$-lists is equal to $N(\mu, I)$, so we label our wedge summands corresponding to simplex $I$ with irreducible $(\mu, I)$-lists.

Then we construct a wedge morphism 
$$q(i): \bigvee_{I\subseteq V}(\Sigma^{\dim I + 1} \operatorname{lk}_{M_{\mu^\circ}} I)^{\vee N(\mu, I)}\to \bigvee_{I\subseteq V}(\Sigma^{\dim I + 1} \operatorname{lk}_{M_{\nu^\circ}} I)^{\vee N(\nu, I)}.$$
The wedge summand $\Sigma^{\dim I + 1} \operatorname{lk}_{M_{\mu^\circ}} I$ with label $(a_{I'})_{I'\le I}$ goes by an obvious inclusion to $\Sigma^{\dim I + 1} \operatorname{lk}_{M_{\nu^\circ}} I$ with the same label.

Then we have to precisely define the geometric realization functor from $\mathcal{IF}$ to category $SimPoset$ of simplicial posets which maps $\mu\mapsto M_\mu$.
For a morphism $f$ from $(V, \mu)$ to $(W, \nu)$ in $\mathcal{IF}$ we map a simplex $(I, c)$ (here $I\subset V$, $c:2^I\to \mathbb{N}$) linearly to simplex $(f(I), c\circ f)$.

Now it remains to prove that these functors $q$ and $M$ are homotopically equal, i.e. that the following diagram with obvious functors to the homotopy category commutes. 
\[
\xymatrix{
\mathcal{IF} \ar[d]^{M} \ar[r]^{q} & \mathcal{WH} \ar[d]^{}\\
SimPoset \ar[r]^{} & \mathcal{H}
}
\]
The commutativity at the level of objects is just lemma~\ref{lemInflatedWedgeDecomp}, let us prove the commutativity at the level of morphisms.
Any morphism in $\mathcal{IF}$ decomposes to morphisms adding one to some $\mu_I$, therefore we can consider only morphisms from $(V, \mu)$ to $(V, \nu)$ with $\nu$ different from $\mu$ exactly at one simplex $I$ and the difference is unit.
The case when for some $I$ we have $\mu_I = 0, \nu_I = 1$ follows from general arguments.
The case when for some $I$ we have $\nu_I=\mu_I+1>1$ follows from the analysis of the proof of lemma~\ref{lemAddInflation} and lemma~\ref{lemInflatedWedgeDecomp}.
Indeed, we glue to $M_\mu$ the stars of simplices $(I, c)$ with $c_I=\nu_I$ and this gluing is equivalent to wedge addition of something to $M_\mu$.
So it does not affect the decomposition of $M_\mu$ itself to a wedge sum.

\end{proof}

\begin{rem}
There is another (probably, more interesting) category related to inflated complexes.
One can consider the category $\mathcal{IF}_g$ of geometric realizations of inflation functions, where morphisms are embeddings of simplicial posets resembling non-decreasing maps of inflation functions.
This category is more rich: while $\mathcal{IF}$ has only one isomorphism for an object $(V, \mu)$ projecting to identity map $V\to V$, that is the identity isomorphism, the category $\mathcal{IF}_g$ has plenty of such isomorphisms, namely $\prod_I (\mu_I !)^{\prod_{I'<I}\mu_I'}$.
A natural question is whether our wedge decomposition is functorial for this category, and the answer is, unfortunately, negative.
Precisely, there is no functor $\mathcal{IF}_g\to\mathcal{WH}$ making the following diagram commutative.
\[
\xymatrix{
\mathcal{IF} \ar[d]^{} \ar[r]^{q} & \mathcal{WH} \ar[d]^{}\\
\mathcal{IF}_g \ar[r]^{}\ar@{.>}[ur] |{+} & \mathcal{H}
}
\]
\end{rem}

\end{document}